\renewcommand\hat{\widehat}
\def\1{{\bf 1}}
\theoremstyle{definition}
\newtheorem{theorem}{Theorem}[section]
\newtheorem{lemma}[theorem]{Lemma}
\newtheorem{prop}[theorem]{Proposition}
\newtheorem{defi}[theorem]{Definition}
\theoremstyle{definition}
\newtheorem{remark}[theorem]{Remark}
\title{Rigidity and non-rigidity for uniform perturbed lattice}
\author{Yuta Arai\thanks{Graduate School of Science and Engineering, Chiba University, Chiba-shi 263-8522, Japan. Email: yutaarai@chiba-u.jp}}
\date{}
\begin{document}

\maketitle


\begin{abstract}
A point process on the topological space S is at most countable subset without a random accumulation point in S.
In studies of the point processes, there is a problem of seeing the properties of rigidity and tolerance, and this problem is studied actively in recent years.
When let $\displaystyle\mathbb{Z}(\mathbf{X}):=(z+X_z)_{z\in\mathbb{Z}^d}$ be the perturbed lattice that is the lattice $\mathbb{Z}^d$ perturbed by independent and identically random variables $(X_z)_{z\in\mathbb{Z}^d}$ taking values in $\mathbb{R}^d$, regarding the Gaussian perturbed lattice, Peres and Sly showed that there exist the phase transitions with respect to the rigidity and the tolerance when $d\geq 3$ in recent paper. 
In this paper, when random variables $(X_z)_{z\in\mathbb{Z}^d}$ follow uniform distribution, we show the mutually absolute continuity of the measure without one point and the original measure on a restricted set of spaces of the point process in $d\geq 4$.
Also, as a consequence of the above, we show that when random variables $(X_z)_{z\in\mathbb{Z}^d}$ follow the uniform distribution, phase transitions related to the tolerance can be seen in $d\geq 4$.
\end{abstract}

\section{Introduction} 
\begin{spacing}{1.2}
Let $\displaystyle\mathbb{Z}(\mathbf{X}):=(z+X_z)_{z\in\mathbb{Z}^d}$ denote the lattice $\mathbb{Z}^d$ perturbed by independent and identically random variables $(X_z)_{z\in\mathbb{Z}^d}$ taking values in $\mathbb{R}^d$. A corresponding point process is denoted by $\Pi(\mathbb{Z}(\mathbf{X}))$. The point process will always be assumed to be simple and locally finite. In studies of the point processes, there is a problem of seeing the properties of rigidity and tolerance, and this problem is studied actively in recent years. Rigidity is a property that it is possible to determine the number of points inside the ball by looking at the points outside the ball by using a measurable function, tolerance is a property that the point process excluding a finite number of points and the original point process can not be distinguished by looking at each distribution. Poisson point process, which is  a model of non-correlated point configuration has the tolerance and does not have the rigidity. As shown by Ghosh and Peres in 2017 \cite{Ghosh}, Ginibre point process which is one of the research subjects in random matrix theory has rigidity. 
In addition, Osada and Shirai show the dichotomy between absolute continuity and singularity of the Ginibre point process and its reduced Palm measures in 2016 \cite{Osada}. 
Also, in the general case, Ghosh has shown the equivalence of the rigidity and the dichotomy in 2016 \cite{ghosh}.
Regarding the point process given by the perturbed lattice, Holroyd and Soo showed that phase transitions related to tolerance doesn't occur i.e. the point process is neither insertion tolerant nor deletion tolerant when $d=1,2$ in 2013 \cite{Holroyd}. 
In addition, regarding the Gaussian perturbed lattice, Peres and Sly showed that there exist the phase transitions with respect to the rigidity and the tolerance when $d\geq 3$ in recent paper \cite{Peres}. However, it was not known whether the phase transitions related to tolerance can be seen when random variables $(X_z)_{z\in\mathbb{Z}^d}$ follow the distribution with compact support. In this paper, by using a generalized oriented bond percolation, when random variables $(X_z)_{z\in\mathbb{Z}^d}$ follow uniform distribution, we show the mutually absolute continuity of the measure without one point and the original measure on a restricted set of spaces of the point process in $d\geq 4$.  (See Sections 2.)
From the above, when the random variables $(X_z)_{z\in\mathbb{Z}^d}$ follow uniform distribution, we show the phase transition regarding tolerance in $d\geq 4$,
i.e. when let $\Pi(\mathbb{Z}(\mathbf{X}))$ and  $\Pi(\mathbb{Z}_0(\mathbf{Y}))$ be a point process, we show that there exists a critical parameter $L_c(d)$ such that $\Pi(\mathbb{Z}(\mathbf{X}))$ and  $\Pi(\mathbb{Z}_0(\mathbf{Y}))$ are not mutually singular and not absolutely continuous if $L>L_c(d)$, and $\Pi(\mathbb{Z}(\mathbf{X}))$ and  $\Pi(\mathbb{Z}_0(\mathbf{Y}))$ are mutually singular if $0<L<L_{c}(d)$. (See Section 3.) 
\section{Preliminaries}
\subsection{Notation}
Let $d$ be the dimension.
We write $I:=\mathbb{N}\cup\{0\} \ \rm{or} \  \{1,2,...,t\}(t\in\mathbb{N})$ and $d^{*}:=d-1$. Let $\mathbb{Z}_{\geq 0}:=\{z\in\mathbb{Z}: z\geq 0\}$. For $x= (x_1,..., x_d)\in\mathbb{R}^d$, $|x|$ stands for the ${\ell}^1$-norm: $\displaystyle|x|=\sum^{d}_{i=1}|x_i|$. Let $(\Omega , \mathcal{F} , \mathbb{P})$ be a probability space. For i.i.d. random variables $X_z$, let $\mathbf{X}:=(X_z)_{z\in\mathbb{Z}^d}$ be a sequence of random variables. Let $\displaystyle\mathbb{Z}(\mathbf{X}):=(z+X_z)_{z\in\mathbb{Z}^d}$ denote the lattice $\mathbb{Z}^d$ perturbed by independent and identically random variables $(X_z)_{z\in\mathbb{Z}^d}$ taking values in $\mathbb{R}^d$.  Let $((\mathbb{R}^d)^{\mathbb{Z}^d},\mathcal{B}((\mathbb{R}^d)^{\mathbb{Z}^d}),\hat{\nu})$ be space of perturbed lattice, where $\hat{\nu}(\cdot)=\mathbb{P}(\mathbb{Z}(\mathbf{X})\in\cdot)$.
\subsection{Point process}
We define the point process as follows. In this paper, a configuration space is defined by
\begin{equation*}
\begin{split}
\displaystyle\mathcal{M}(\mathbb{R}^d)&=\{\xi=\sum_{j\in\Lambda}\delta_{x_j}:x_j\in\mathbb{R}^d,\Lambda \ \rm{is \ a\  countable\ set},\xi(K)<\infty: \forall K\subset\mathbb{R}^d \ \rm{is} \ \rm{compact}\}\\
&=\{\xi:\rm{nonnegative\ integer\ valued\ Radon\ measure}\}
\end{split}
\end{equation*}
where $\delta_a$ is the delta measure,
\begin{equation*}
\delta_a(A)=\1_A(a)=
\begin{cases}
1,&a\in A\\
0,&a\notin A.
\end{cases}
\end{equation*}
Also, $\mathcal{M}_0(\mathbb{R}^d)$ is defined by
\begin{equation*}
\begin{split}
\mathcal{M}_0(\mathbb{R}^d)&=\{\xi\in\mathcal{M}(\mathbb{R}^d):\xi(\{x\})\leq 1,x\in\mathbb{R}^d\}\\
&=\{\{x_j\}_{j\in\Lambda}\subset\mathbb{R}^d:\sharp(\{x_j\}_{j\in\Lambda}\cap K)<\infty,\forall K\subset\mathbb{R}^d:\rm{compact}\}\\
&=\{\{x_j\}_{j\in\Lambda}\subset\mathbb{R}^d:\{x_j\}_{j\in\Lambda}\subset\mathbb{R}^d\ \rm{does\ not\ have\ accumulation\ point}\}.
\end{split}
\end{equation*}
In the second equality, we used the fact that one can identify a Radon measure with a countable set when there are no multiple points.

Let $C_c(\mathbb{R}^d)$ be the set of all continuous function with compact support on $\mathbb{R}^d$.
Now, we put the following topology on $\mathcal{M}(\mathbb{R}^d)$.
\begin{defi}[vague topology]
\vspace{0.5cm}
For $f\in C_c(\mathbb{R}^d)$ and\ $\xi\in\mathcal{M}(\mathbb{R}^d)$, we define\\
\centerline{$\displaystyle\langle\xi,f\rangle=\int_{\mathbb{R}^d}f(x)\xi(dx)=\sum_{i\in\Lambda}f(x_i)$.}

For $\xi,\xi_n\in\mathcal{M}(\mathbb{R}^d)(n=1,2,...)$, if $\langle\xi_n,f\rangle\rightarrow\langle\xi,f\rangle(\forall f\in C_c(\mathbb{R}^d))$ then we said that $\xi_n$ vaguely converges to $\xi$.
The topology determined by this convergence is called vague topology.
\end{defi}
\begin{remark}
Under vague topology, $\mathcal{M}(\mathbb{R}^d)$ is Polish space.

We omit this proof, see references \cite{Olav},\cite{Peter}.
\end{remark}
Let $\mathcal{B}(\mathcal{M}(\mathbb{R}^d))$ be $\sigma$-algebra of $\mathcal{M}(\mathbb{R}^d)$ generated by family of mapping$\{\xi\mapsto\xi(K):K\ \rm{is\ compact}\}$. $\mathcal{M}(\mathbb{R}^d)$-valued random variables $\xi=\xi(\omega)$ defined on probability space $(\Omega , \mathcal{F} , \mathbb{P})$ is called Point process on $\mathbb{R}^d$.\\
However, in this paper, probability measure $\mu$ on measurable space $(\mathcal{M}(\mathbb{R}^d),\mathcal{B}(\mathcal{M}(\mathbb{R}^d)))$ is called Point process.
\subsection{Perturbed lattice}
In this subsection, we define the tolerance after defining perturbed lattice.  We define the set of path as follows.
\begin{align}
\begin{split}
\Gamma_{t,z}&:=\{\gamma_t=(z_0,z_1,...,z_t): t\in\mathbb{N},z_j\in\mathbb{Z}^d(j=1,...,t-1),\notag\\
& \ \ \ \ \  \ \ \ z_j(0\leq j\leq t)\ {\rm are\  distinct}, z_0=0, \ z_t=z\}.
\end{split}
\\
\begin{split}
\label{vvgamma}
\displaystyle\Gamma_t&:=\sum_{z\in\mathbb{Z}^d}\Gamma_{t,z}.
\end{split}
\end{align}
We define the perturbed lattice.
\begin{defi}[Perturbed lattice]
\label{lattice}
We prepare a sequence of i.i.d. random variables $\mathbf{X}:=(X_z)_{z\in\mathbb{Z}^d}$, $\mathbf{X^{'}}:=(X^{'}_z)_{z\in\mathbb{Z}^d}$, $\mathbf{Y}:=(Y_z)_{z\in\mathbb{Z}^d\setminus\{0\}}$.\\
Then we put\\
\centerline{$\displaystyle\mathbb{Z}(\mathbf{X}):=(z+X_z)_{z\in\mathbb{Z}^d}, \ \mathbb{Z}_0(\mathbf{Y}):=(z+Y_z)_{z\in\mathbb{Z}^d\setminus\{0\}}$}\\
which are called the perturbed lattice.

Also, perturbed lattice which is shifted by $\gamma=(z_0,z_1,...)$ is defined as follows.\\
\centerline{$\displaystyle\mathbb{Z}_{\gamma}(\mathbf{X^{'}}):=(X^{'}_z+\gamma(z))_{z\in\mathbb{Z}^d}$}\\
where  for $\{\gamma\}:=\{z_i\}_{i\in I}$ ($I=\mathbb{N}\cup\{0\}$) and $w\in\mathbb{Z}^d$, the mapping $\gamma:\mathbb{Z}^d\rightarrow\mathbb{Z}^d$ is defined by\\
\begin{equation*}
\gamma(w)=
\begin{cases}
z_{i+1},&\text{$w=z_i\in\{\gamma\}$}\\
w,&\text{$w\notin\{\gamma\}$}
\end{cases}
\end{equation*}
and for $\{\gamma\}:=\{z_i\}_{i\in I}$ ($I=\{1,2,\dots, t\}$) and  $w\in\mathbb{Z}^d$, the mapping $\gamma:\mathbb{Z}^d\rightarrow\mathbb{Z}^d$ is defined by\\
\begin{equation*}
\gamma(w)=
\begin{cases}
z_{t},&\text{$w=z_t\in\{\gamma\}$}\\
z_{i+1},&\text{$w=z_i\in\{\gamma\}, 1\leq i\leq t-1$}\\
w,&\text{$w\notin\{\gamma\}$.}
\end{cases}
\end{equation*}
\end{defi}
Then a corresponding point processes are denoted by $\Pi(\mathbb{Z}(\mathbf{X}))$ and $\Pi(\mathbb{Z}_0(\mathbf{Y}))$, $\Pi(\mathbb{Z}_{\gamma}(\mathbf{X^{'}}))$ respectively  where mapping $\displaystyle\Pi:(\mathbb{R}^d)^{\mathbb{Z}^d}\longrightarrow \mathcal{M}(\mathbb{R}^d)$ is a projection from a space that distinguishes particles to a space that does not distinguish particles.
Note that $\Pi(\mathbb{Z}(\mathbf{X}))$ is a point process with the perturbed lattice $\mathbb{Z}(\mathbf{X})$ as the support.

Next we introduce the tolerance in the following. It conforms to the definition by Holroyd and Soo \cite{Holroyd}.

A $\Pi(\mathbb{Z}(\mathbf{X}))$-point is an $\mathbb{R}^d$ valued random variable $Z$ such that $Z\in \mathbb{Z}(\mathbf{X})$ \ a.s.
We say that the point process $\Pi(\mathbb{Z}(\mathbf{X}))$ is {\bf\textit{deletion tolerant}}
if for any  $\Pi(\mathbb{Z}(\mathbf{X}))$-point, $\Pi(\mathbb{Z}(\mathbf{X}))-\delta_Z$ is absolutely continuous with respect to $\Pi(\mathbb{Z}(\mathbf{X}))$.
We say that the point process $\Pi(\mathbb{Z}(\mathbf{X}))$ is {\bf\textit{deletion singular}}
if for any  $\Pi(\mathbb{Z}(\mathbf{X}))$-point, $\Pi(\mathbb{Z}(\mathbf{X}))-\delta_Z$ and $\Pi(\mathbb{Z}(\mathbf{X}))$ are mutually singular.
We say that the point process $\Pi(\mathbb{Z}(\mathbf{X}))$ is {\bf\textit{insertion tolerant}}
if for any Borel set $V\subset \mathbb{R}^d$ with Lebesgue measure $\mathcal L(V)\in (0,\infty)$, if $U$ is independent of $\mathbb{Z}(\mathbf{X})$ and uniform in $V$ then $\Pi(\mathbb{Z}(\mathbf{X}))+\delta_U$ is absolutely continuous with respect to $\Pi(\mathbb{Z}(\mathbf{X}))$.
We say that the point process $\Pi(\mathbb{Z}(\mathbf{X}))$ is {\bf\textit{insertion singular}}
if for any Borel set $V\subset \mathbb{R}^d$ with Lebesgue measure $\mathcal L(V)\in (0,\infty)$, if $U$ is independent of $\mathbb{Z}(\mathbf{X})$ and uniform in $V$ then $\Pi(\mathbb{Z}(\mathbf{X}))+\delta_U$ and $\Pi(\mathbb{Z}(\mathbf{X}))$ are mutually singular.
\subsection{Generalized oriented bond percolation(GOBP)}
In this subsection, we introduce the generalized oriented bond percolation (following \cite{Nobuo, Yoshida}).\\
Let $\eta_{t,z,w}, \ (t,z,w)\in\mathbb{N}\times\mathbb{Z}^{d^{*}}_{\geq 0}\times\mathbb{Z}^{d^{*}}_{\geq 0}$ be i.i.d. Bernoulli random variables with probability $p$.\\
i.e.
\begin{equation}
\label{p}
\mathbb{P}(\eta_{t,z,w}=1)=p\in[0,1].
\end{equation}
Also, let us call the pair of time space point $\bigl<(t-1,z),(t,w)\bigl>$ a bond if $w=z+e_i$ or $w=z$, where $e_i \ (i=1,2,\dots, d^{*})$ is a standard basis.
Let $\mathbb{B}=\{\bigl<(t-1,z),(t,w)\bigl>: (t,z,w)\in\mathbb{N}\times\mathbb{Z}^{d^{*}}_{\geq 0}\times\mathbb{Z}^{d^{*}}_{\geq 0}, w=z+e_i \ {\rm or} \ w=z \}$ be the set of bonds.

 A bond $\bigl<(t-1,z),(t,w)\bigl>\in\mathbb{B}$ is said to be open if $\eta_{t,z,w}=1$ and closed if $\eta_{t,z,w}=0$.
For $(t-1, z),(t, w)\in(\mathbb{N}\cup\{0\})\times\mathbb{Z}^{d^{*}}_{\geq 0}$, if there is a open bond$\bigl<(t-1, z),(t, w)\bigl>$, then we call $(t-1, z)\in(\mathbb{N}\cup\{0\})\times\mathbb{Z}^{d^{*}}_{\geq 0}$ is directly connected to $(t, w)\in\mathbb{N}\times\mathbb{Z}^{d^{*}}_{\geq 0}$ and write $\displaystyle (t-1, z) \Rightarrow (t, w)$.

Also, we call $\mathbf{0}:=(0,0)\in(\mathbb{N}\cup\{0\})\times\mathbb{Z}^{d^{*}}_{\geq 0}$ is connected to $(t, z)\in\mathbb{N}\times\mathbb{Z}^{d^{*}}_{\geq 0}$ and write $\displaystyle \mathbf{0} \rightarrow (t, z)$, if there is a sequence $(l, z_l)^t_{l=0}$ of $(\mathbb{N}\cup\{0\})\times\mathbb{Z}^{d^{*}}_{\geq 0}$ such that $z_0=0, \ z_t=z$ and $\displaystyle (l, z_l) \Rightarrow (l+1, z_{l+1})$,\ $0\leq l \leq t-1$.

In the following, we introduce the critical probability of GOBP, which attracts much attension.
Let $C((s,z))$ be the set of  time space points connected by the open bond from the point $(s, z)\in\mathbb{N}\times\mathbb{Z}^{d^{*}}_{\geq 0}$ and call it an open cluster:
$C((s, z)):=\{(t, w)\in\mathbb{N}\times\mathbb{Z}^{d^{*}}_{\geq 0} : (s, z) \xrightarrow{} (t, w), s\leq t\}$.
Also, let $|C((s, z))|$ be the number of points of $C((s, z))$.
Then, we define the percolation probability and the critical probability as follows.

For $\mathbf{0}:=(0,0)\in(\mathbb{N}\cup\{0\})\times\mathbb{Z}^{d^{*}}_{\geq 0}$, 
\begin{equation*}
\theta(p):=P_p(|C(\mathbf{0})|=\infty)
\end{equation*}
and let the critical probability be
\begin{equation*}
p_c:=\sup\{p:\theta(p)=0\}
\end{equation*}
where $P_p$ is the product measure on $(\{0,1\}^{\mathbb{B}},\mathcal{B}(\{0,1\}^{\mathbb{B}}))$, where $\mathcal{B}(\{0,1\}^{\mathbb{B}})$ is topological Borel field on $\{0,1\}^{\mathbb{B}}$ and $p$ was defined by (\ref{p}).

Note that in a bond $\bigl<(t-1,z),(t,w)\bigl>\in\mathbb{B}$, we have a case to say that there is a path from $(t-1,z)\in(\mathbb{N}\cup\{0\})\times\mathbb{Z}^{d^{*}}_{\geq 0}$ to $(t, w)\in\mathbb{N}\times\mathbb{Z}^{d^{*}}_{\geq 0}$.

From here, we will introduce the result by Yoshida \cite{Yoshida}, which is the key to prove our main result Theorem \ref{Main}. First, we state the notation in Yoshida \cite{Nobuo}, \cite{Yoshida}.

For $(t,z)\in\mathbb{N}\times\mathbb{Z}^{d^{*}}_{\geq 0}$, let $N_{t,z}$ be the number of open oriented paths from $(0,0)$ to $(t,z)$ and let $\displaystyle{|N_t|:=\sum_{z\in\mathbb{Z}^d}N_{t,z}}$ be the total number of open oriented paths from $(0,0)$ to the “level” t.

We are now ready to introduce the result by Yoshida \cite{Yoshida}.
\begin{theorem}(N.Yoshida \cite{Yoshida})\\
\label{YNobuo}
We set
\begin{equation*}
\displaystyle|\overline{N}_t|:=\frac{|N_t|}{((d^{*}+1)p)^t}.
\end{equation*}
Then, $\displaystyle|\overline{N}_t|$ is a martingale. (Each point of open oriented path has $d^{*}+1$ adjacent points and is connected to the adjacent points with probability $p$.)
Thus, by the martingale convergence theorem the following limit exists almost surely:
\begin{equation}
\label{N}
\displaystyle|\overline{N}_{\infty}|:=\lim_{t\rightarrow\infty}|\overline{N}_t|.
\end{equation}
Moreover,\\
(1)If $d^{*}\geq3$ and $p$ is large enough, then,\\
\centerline{$\displaystyle\mathbb{P}(|\overline{N}_{\infty}|>0)>0.$}\\
(Lemma 2.1.1 of Yoshida \cite{Yoshida})\\
(2)If $d^{*}=1,2$ and $\forall p\in(0,1)$, then,\\
\centerline{$\displaystyle\mathbb{P}(|\overline{N}_{\infty}|=0)=1.$}\\
(Theorem 3.2.1 of Yoshida \cite{Yoshida})
\end{theorem}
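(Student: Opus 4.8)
The plan is to dispatch the four assertions in turn, the first two being routine and the last two carrying the real content. I would first verify the martingale property by a one-step computation. Let $\mathcal{F}_t$ be the $\sigma$-algebra generated by $\{\eta_{s,z,w}:s\le t\}$, with the convention $N_{t,z}=0$ for $z\notin\mathbb{Z}^{d^{*}}_{\geq 0}$. Every open oriented path from $(0,0)$ to a level-$(t+1)$ point $(t+1,w)$ arises by appending, to an open path ending at one of the $d^{*}+1$ predecessors $w,\,w-e_1,\dots,w-e_{d^{*}}$ of $w$, the bond from that predecessor to $(t+1,w)$, and distinct (path, bond) pairs give distinct paths, so
\begin{equation*}
N_{t+1,w}=N_{t,w}\,\eta_{t+1,w,w}+\sum_{i=1}^{d^{*}}N_{t,w-e_i}\,\eta_{t+1,w-e_i,w}.
\end{equation*}
The bonds between levels $t$ and $t+1$ are independent of $\mathcal{F}_t$ and open with probability $p$, hence $\mathbb{E}[N_{t+1,w}\mid\mathcal{F}_t]=p\bigl(N_{t,w}+\sum_{i=1}^{d^{*}}N_{t,w-e_i}\bigr)$, and summing over $w$ gives $\mathbb{E}[\,|N_{t+1}|\mid\mathcal{F}_t\,]=(d^{*}+1)p\,|N_t|$. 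Thus $|\overline{N}_t|$ is a nonnegative martingale with $|\overline{N}_0|=1$, and the martingale convergence theorem produces the a.s.\ limit $|\overline{N}_\infty|$, with $\mathbb{E}[\,|\overline{N}_\infty|\,]\le 1$ by Fatou.

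For (1) I would run the second moment method. Writing $|N_t|=\sum_{\gamma}\prod_{b\in\gamma}\eta_b$ as a sum over the $(d^{*}+1)^t$ oriented paths $\gamma$ of length $t$ from $(0,0)$, one obtains $\mathbb{E}[\,|N_t|^2\,]=\sum_{\gamma,\gamma'}p^{\,2t-|\gamma\cap\gamma'|}$, where $|\gamma\cap\gamma'|$ counts the bonds common to $\gamma$ and $\gamma'$. Dividing by $((d^{*}+1)p)^{2t}$,
\begin{equation*}
\mathbb{E}[\,|\overline{N}_t|^2\,]=\mathbb{E}_{\gamma,\gamma'}\bigl[\,p^{-|\gamma\cap\gamma'|}\,\bigr],
\end{equation*}
the expectation taken over two independent uniform oriented paths, equivalently over two independent random walks on $\mathbb{Z}^{d^{*}}$ with increments uniform on $\{0,e_1,\dots,e_{d^{*}}\}$. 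A common bond at step $t$ forces the two walks to agree at time $t-1$, so $|\gamma\cap\gamma'|$ is dominated by the collision local time of the pair; for $d^{*}\ge 3$ the difference walk is transient, whence this local time has a finite exponential moment of some positive order, and taking $p$ close enough to $1$ brings $p^{-1}$ below the radius of convergence. Hence $\sup_t\mathbb{E}[\,|\overline{N}_t|^2\,]<\infty$; an $L^2$-bounded martingale is uniformly integrable, so $\mathbb{E}[\,|\overline{N}_\infty|\,]=\lim_t\mathbb{E}[\,|\overline{N}_t|\,]=1$, which forces $\mathbb{P}(|\overline{N}_\infty|>0)>0$.

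For (2) I would instead show that $\mathbb{E}[\,|\overline{N}_t|^\theta\,]\to 0$ for some $\theta\in(0,1)$. Since $x\mapsto x^\theta$ is concave, continuous and bounded on bounded sets, Fatou's lemma then gives $\mathbb{E}[\,|\overline{N}_\infty|^\theta\,]\le\liminf_t\mathbb{E}[\,|\overline{N}_t|^\theta\,]=0$, i.e.\ $|\overline{N}_\infty|=0$ a.s. The decay of the $\theta$-th moment would be extracted from the self-similar one-step decomposition of $|\overline{N}_t|$ over the level-one points reached from the origin, iterated; the driving mechanism is that for $d^{*}\le 2$ two independent oriented walks collide infinitely often, so the expected collision local time diverges — the exact opposite of the input used in (1) — and this holds for every $p\in(0,1)$, including $p$ arbitrarily close to $1$.

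The soft part is the martingale identity together with its convergence; the real work is (1) and (2), and of these (2) is the principal obstacle. Ruling out positivity of $|\overline{N}_\infty|$ for every $p\in(0,1)$ in dimensions $d^{*}\le 2$ cannot be read off the first or second moment and requires a genuinely quantitative argument (fractional moments, or analysis of the size-biased law), which is the content of Theorem 3.2.1 of Yoshida \cite{Yoshida}; the second-moment bound in (1) is lighter but still needs the reformulation of $\mathbb{E}[\,|\overline{N}_t|^2\,]$ as a collision functional of two independent walks.
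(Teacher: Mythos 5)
The paper itself supplies no proof of this statement: it is imported verbatim, with parts (1) and (2) attributed to Lemma 2.1.1 and Theorem 3.2.1 of Yoshida \cite{Yoshida}, and the only internal content is the remark that the matrix $A_{t,x,y}$ of (\ref{original}) satisfies Yoshida's hypotheses (1.2)--(1.8). So there is no in-paper argument to compare against; what you have written is a reconstruction of what the cited reference does, and it is a faithful one. The martingale step is correct as stated: the one-step decomposition $N_{t+1,w}=N_{t,w}\eta_{t+1,w,w}+\sum_{i}N_{t,w-e_i}\eta_{t+1,w-e_i,w}$ (with $N_{t,z}=0$ off the orthant) together with independence of the level-$(t+1)$ bonds from $\mathcal{F}_t$ gives $\mathbb{E}[\,|N_{t+1}|\mid\mathcal{F}_t]=(d^{*}+1)p\,|N_t|$, and nonnegativity plus $|\overline{N}_0|=1$ yields the a.s.\ limit. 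Your second-moment identity $\mathbb{E}[\,|\overline{N}_t|^2\,]=\mathbb{E}_{\gamma,\gamma'}[p^{-|\gamma\cap\gamma'|}]$ and the reduction to the collision local time of the transient difference walk for $d^{*}\ge 3$ is precisely the mechanism behind Yoshida's Lemma 2.1.1; $L^2$-boundedness for $p$ near $1$, uniform integrability, and $\mathbb{E}[\,|\overline{N}_\infty|\,]=1$ then give (1). For (2) you correctly identify that recurrence for $d^{*}\le 2$ destroys the second-moment bound but does not by itself force $|\overline{N}_\infty|=0$, and that a genuinely quantitative input (fractional moments or analysis of the size-biased law) is needed; since you defer that input to Yoshida just as the paper does, there is nothing to flag as a gap. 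One cosmetic point: for the Fatou step in (2) you only need nonnegativity and a.s.\ convergence of $|\overline{N}_t|^\theta$ — concavity and boundedness on bounded sets are irrelevant there (concavity is what would be used to \emph{prove} decay of the fractional moment, not to pass to the limit).
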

\begin{remark}
In Yoshida \cite{Nobuo}, \cite{Yoshida}, the above GOBP is not specified as an example of the linear stochastic evolution.
Let $A_t=(A_{t, x, y})_{x,y\in\mathbb{Z}^d}, t\in\mathbb{N}$ be a sequence of random matrices on a probability space $(\Omega , \mathcal{F} , \mathbb{P})$ such that $A_1, A_2,\dots$ are i.i.d.
Then, in GOBP of Yoshida \cite{Nobuo}, \cite{Yoshida}, the random matrix $A_{t, x, y}$ is written as $A_{t, x, y}=\1_{\{|x-y|=1\}}\eta_{t, x, y}+\delta_{x, y}\zeta_{t, y}$ (See subsection 1.2 of Yoshida \cite{Yoshida} for a detailed definition).
On the other hand, in our case, we use 
\begin{equation}
\label{original}
A_{t, x, y}=\1_{\{y=x+e_i \ {\rm or} \ y=x\}}\eta_{t, x, y} 
\end{equation}
as the random matrix to use, where $e_i \ (i=1,2,\dots, d^{*})$ is a standard basis.
Because it is easy to find that (\ref{original}) satisfies (1.2) to (1.8) of Yoshida \cite{Yoshida}, we see that Theorem \ref{YNobuo} holds.
\end{remark}
\subsection{Correspondence between the perturbed lattice and the generalized oriented bond percolation (GOBP)}
In this subsection, we state a correspondence between the perturbed lattice and the generalized oriented bond percolation. 
For this purpose, first, by introducing the embedding $f:(\mathbb{N}\cup\{0\})\times\mathbb{Z}^{d^{*}}_{\geq 0}\xrightarrow{}\mathbb{Z}^{d}$, we give the point in the perturbed lattice that corresponds to the point in the GOBP.
Next, we define an open / closed bond and also define a connected points in the perturbed lattice.
Specifically, by Definition \ref{openclosed} and Lemma \ref{indepen} below, we have a correspondence between open and closed bonds in the time-space $(\mathbb{N}\cup\{0\})\times\mathbb{Z}^{d^{*}}_{\geq 0}$ and those in the space $\mathbb{Z}^d$.
Similarly, by Definition \ref{connected},  we have a correspondence between the connected points in the time-space $(\mathbb{N}\cup\{0\})\times\mathbb{Z}^{d^{*}}_{\geq 0}$ and those in the space $\mathbb{Z}^d$.
From above, and the generalized oriented bond percolation process in $d^{*}+1$ space-time dimensions can be embedded in the oriented bond percolation process in $d$ dimensions while keeping the mathematical structure by $f:(\mathbb{N}\cup\{0\})\times\mathbb{Z}^{d^{*}}_{\geq 0}\xrightarrow{}\mathbb{Z}^{d}$. (For details, see Fig.\ref{fig2}.)
\begin{figure}[H]
\centering
\includegraphics[keepaspectratio, scale=0.8]{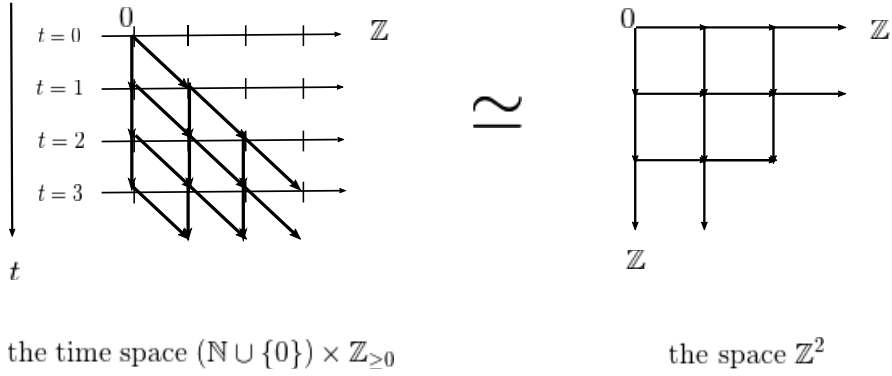}
\caption{An example of correspondence between the perturbed lattice and the generalized oriented bond percolation when $d=2$.}
\label{fig2}
\end{figure}
First, by introducing the embedding $f:(\mathbb{N}\cup\{0\})\times\mathbb{Z}^{d^{*}}_{\geq 0}\xrightarrow{}\mathbb{Z}^{d}$, we give a point corresponding to the point $(t, w):=(t, a_1, a_2, \dots, a_{d-1})\in\mathbb{N}\times\mathbb{Z}^{d^{*}}_{\geq 0}$ by $\displaystyle w_t=(t-\sum_{k=1}^{d-1}a_k, a_1, a_2, \dots, a_{d-1})\in\mathbb{Z}^d$,
i.e.
$$\displaystyle f:(t, w)\longmapsto w_t.$$
Then, the bond in the space $\mathbb{Z}^d$ corresponding to the bond $\bigl<(t-1, z),(t, w)\bigl>$ in the time-space $(\mathbb{N}\cup\{0\})\times\mathbb{Z}^{d^{*}}_{\geq 0}$ is given by $\bigl<z_{t-1},w_t\bigl>$. (where $z_{t-1},w_t\in\mathbb{Z}^d$ and $|z_{t-1}-w_t|=1$.)

We will introduce further what is needed to define open and closed of the bonds in the space of the perturbed lattice.

For $L\in\mathbb{R}$, let 
\begin{equation}
\label{J(z)}
J(z):=z+[-L,L]^d=[-L+z_1,L+z_1]\times\cdots\times[-L+z_d,L+z_d]
\end{equation}
where $z=(z_1,...,z_d)\in\mathbb{Z}^d$.
 
In this paper, we take up the case where random variables $X_z$, $X^{'}_z$ and $Y_z$ follow uniform distribution on $[-L,L]^d$.\\
Now let us define two states of each bond, open and closed as follows.
\begin{defi}[open and closed bonds]
\label{openclosed}
For $z_{t-1},w_t\in\mathbb{Z}^d$, $|z_{t-1}-w_t|=1$, if $X_{z_{t-1}}+z_{t-1}\in J(z_{t-1})\cap J(w_t)$ and $Y_{w_t}+w_t\in J(z_{t-1})\cap J(w_t)$, then $\hat{\eta}_{z_{t-1}, w_t}=1$ and a bond$\bigl<z_{t-1},w_t\bigl>$ is said to be open. (For details, see Fig.\ref{fig1}.)
Also, for $z_{t-1},w_t\in\mathbb{Z}^d$, $|z_{t-1}-w_t|=1$, if $X_{z_{t-1}}+z_{t-1}\notin J(z_{t-1})\cap J(w_t)$ or $Y_{w_t}+w_t\notin J(z_{t-1})\cap J(w_t)$, then $\hat{\eta}_{z_{t-1}, w_t}=0$ and a bond$\bigl<z_{t-1},w_t\bigl>$ is said to be closed.
\end{defi}
\begin{figure}[H]
\centering
\includegraphics[keepaspectratio, scale=0.6]{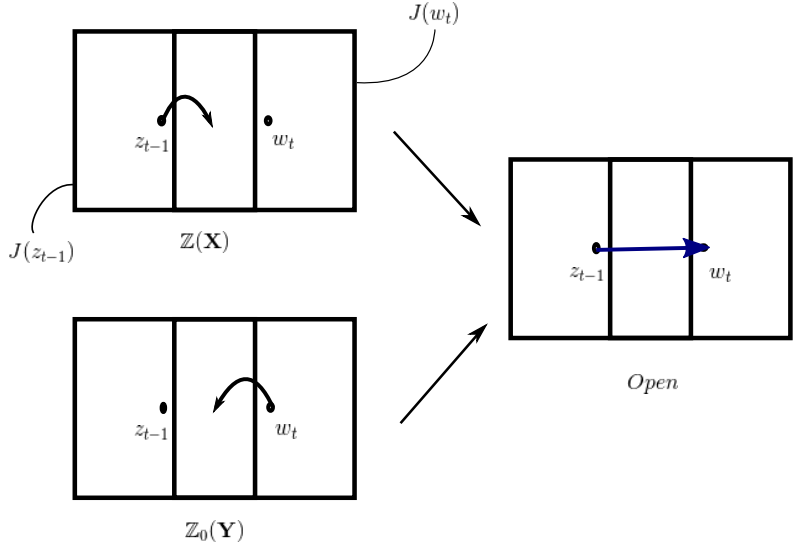}
\caption{An example of open bond $\bigl<z_{t-1},w_t\bigl>$ in the perturbed lattice.}
\label{fig1}
\end{figure}
Now, we need to see that $\hat{\eta}_{z_{t-1}, w_t}$ in Definition \ref{openclosed} corresponds to $\eta_{t,z,w}$ introduced in (\ref{p}).
That is to say, we see independence of $\hat{\eta}_{z_{t-1}, w_t}$ in the following lemma.
\begin{lemma}
\label{indepen}
$\hat{\eta}_{z_{t-1}, w_t}, \ (z_{t-1}, w_t)\in\mathbb{Z}^d\times\mathbb{Z}^d$ in Definition \ref{openclosed} are i.i.d. Bernoulli random variables with $\displaystyle\hat{p}=\left(1-\frac{1}{2L}\right)^2$.
\end{lemma}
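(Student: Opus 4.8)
The whole statement turns on one structural feature of the uniform law: the uniform distribution on the cube $[-L,L]^d$ is the product of $d$ independent uniform laws on $[-L,L]$. The plan is to use this to reduce each $\hat\eta$ to a function of a single coordinate of a single perturbation, and then read off both the Bernoulli parameter and the mutual independence. First I would pin down which bonds actually occur: under the embedding $f$ a bond $\langle z_{t-1},w_t\rangle$ always runs from ``level'' $t-1$ to ``level'' $t$, where the level of a point of $\Z^d$ is the sum of its coordinates, and together with $|z_{t-1}-w_t|=1$ this forces $w_t=z_{t-1}+e_j$ for some $j\in\{1,\dots,d\}$ (the step $w=z$ of the GOBP becomes $+e_1$, and each step $w=z+e_i$, $i=1,\dots,d^{*}$, becomes $+e_{i+1}$). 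So the family to analyse is $\{\hat\eta_{z,z+e_j}:z\in\Z^d,\ j=1,\dots,d\}$, and throughout I would use that $\mathbf X$ and $\mathbf Y$ are independent sequences of i.i.d.\ vectors, each coordinate being uniform on $[-L,L]$.

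Next comes the one-coordinate reduction. For $w_t=z_{t-1}+e_j$ the box $J(z_{t-1})\cap J(w_t)$ agrees with $J(z_{t-1})$ in every coordinate $k\neq j$, while in the $j$-th coordinate it is the interval of length $2L-1$ obtained by trimming one unit off the bottom of $[(z_{t-1})_j-L,(z_{t-1})_j+L]$. Since $X_{z_{t-1}}\in[-L,L]^d$ almost surely, this yields
\begin{equation*}
\hat\eta_{z_{t-1},w_t}=\1_{\{(X_{z_{t-1}})_j\in[1-L,L]\}}\cdot\1_{\{(Y_{w_t})_j\in[-L,L-1]\}}\qquad\text{a.s.},
\end{equation*}
so $\hat\eta_{z_{t-1},w_t}$ is a $\{0,1\}$-valued function of the pair $\bigl((X_{z_{t-1}})_j,(Y_{w_t})_j\bigr)$ alone and is in particular a Bernoulli variable. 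Its parameter is then immediate: each of $(X_{z_{t-1}})_j$ and $(Y_{w_t})_j$ is uniform on an interval of length $2L$, the intervals $[1-L,L]$ and $[-L,L-1]$ have length $2L-1$, and $\mathbf X\indep\mathbf Y$, so $\mathbb P(\hat\eta_{z_{t-1},w_t}=1)=\bigl(1-\tfrac{1}{2L}\bigr)^2=\hat p$.

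For mutual independence, consider the family of scalar random variables $\{(X_z)_k,(Y_z)_k:z\in\Z^d,\ k=1,\dots,d\}$; since $\mathbf X\indep\mathbf Y$ and each of the two sequences is i.i.d.\ with product marginals, this family is mutually independent. By the previous step $\hat\eta_{z,z+e_j}$ depends only on $\{(X_z)_j,(Y_{z+e_j})_j\}$, and I would check that distinct bonds are assigned \emph{disjoint} such pairs: the scalar $(X_z)_j$ recovers $(z,j)$; the scalar $(Y_{z+e_j})_j$ recovers $j$ and $z+e_j$, hence $z$; and no $X$-scalar coincides with a $Y$-scalar. Hence the $\hat\eta$'s are functions of pairwise disjoint blocks of a mutually independent family, so they are mutually independent, which together with the parameter computation gives i.i.d.\ Bernoulli$(\hat p)$.

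The genuinely delicate point is this last step, since the $\hat\eta$'s are \emph{not} carried by disjoint blocks of the \emph{vector} perturbations: two bonds issuing from a common tail $z$ both involve $X_z$, and two bonds into a common head $w$ both involve $Y_w$. What rescues the argument is precisely the coordinatewise reduction together with the orientation coming from $f$: distinct axis-directions out of a vertex touch distinct, independent coordinates of the shared perturbation, and because only the positive directions $+e_j$ occur one never has to compare the bond-events for $+e_j$ and $-e_j$ at the same vertex, which really would be dependent (their $X$-events are $\{(X_z)_j\in[1-L,L]\}$ and $\{(X_z)_j\in[-L,L-1]\}$, overlapping in a set of length $2L-2$). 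For this reason I would make the product structure of the uniform law explicit from the start, as it is the mechanism behind the whole lemma.
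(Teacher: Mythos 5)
Your proof is correct and rests on the same underlying mechanism the paper uses — the product structure of the uniform law on $[-L,L]^d$, which makes the bond-events factor coordinatewise. The paper's proof first establishes that the $d$ events at a common tail $z_{t-1}$ are independent (which is exactly your coordinatewise observation, though the paper states it more tersely as ``the events $\{X_{z_{t-1}}+z_{t-1}\in J(z_{t-1})\cap J(w_{i,t})\}$ are independent''), and then passes to general finite collections by invoking the i.i.d.\ nature of $X_z$ and $Y_z$ across sites. Your version is somewhat more carefully executed: by writing each $\hat\eta_{z,z+e_j}$ explicitly as $\1_{\{(X_z)_j\in[1-L,L]\}}\1_{\{(Y_{z+e_j})_j\in[-L,L-1]\}}$ and checking that distinct bonds are assigned disjoint scalar pairs $\{(X_z)_j,(Y_{z+e_j})_j\}$, you handle in one stroke not only several bonds issuing from a common tail but also several bonds entering a common head (where $z+e_j=z'+e_{j'}$ with $z\ne z'$ forces $j\ne j'$, hence distinct coordinates of the shared $Y$-vector). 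The paper's passage from the common-tail identity (2.6) to the general case (2.5) glosses over this head-sharing configuration; your disjoint-blocks argument fills that small gap. Your closing remark about why only the $+e_j$ directions arise (via the embedding $f$), and why $\pm e_j$ at the same vertex would genuinely fail to be independent, is a correct and clarifying observation that the paper does not make explicit but implicitly relies on.
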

\begin{proof}
First, we see independence of $\hat{\eta}_{z_{t-1}, w_t},  \ (z_{t-1}, w_t)\in\mathbb{Z}^d\times\mathbb{Z}^d$.
From the symmetry of the square lattice $\mathbb{Z}^{d}$, for $(z, w),(z, w^{'})\in\mathbb{Z}^{d}\times\mathbb{Z}^{d}$ s.t. $|z-w|=|z-w^{'}|=1$,
\begin{equation*}
\mathbb{P}(\hat{\eta}_{z, w}=x)= \mathbb{P}(\hat{\eta}_{z, w^{'}}=x), \forall x\in\{0,1\}.
\end{equation*}
Therefore, for any finite number of $(z_{t_1-1},w_{t_1}), (z_{t_2-1},w_{t_2}), \dots, (z_{t_n-1}, w_{t_n})\in\mathbb{Z}^{d}\times\mathbb{Z}^{d}$, it is enough to show the following.
\begin{equation}
\label{independence}
\mathbb{P}(\bigcap_{i=1}^n\{\hat{\eta}_{z_{t_i-1},w_{t_i}}=x_i\})=\prod_{i=1}^n\mathbb{P}(\hat{\eta}_{z_{t_i-1},w_{t_i}}=x_i)
\end{equation}
where $x_1, x_2, \dots, x_n\in\{0,1\}$ and $t_1, t_2, \dots, t_n\in\mathbb{N}$.

Now, we put
\begin{equation*}
C^{x_i}_{t_i, z, w}=
\begin{cases}
D_{t_i, z, w},&x_i=1\\
D^c_{t_i, z, w},&x_i=0
\end{cases}
\end{equation*}
where 
\begin{equation*}
D_{t, z, w}:=\{\omega\in\Omega : X_{z_{t-1}}(\omega)+z_{t-1}\in J(z_{t-1})\cap J(w_t), Y_{w_t}(\omega)+w_t\in J(z_{t-1})\cap J(w_t)\}\subset\Omega.
\end{equation*}
Since the number of oriented paths is equal to the dimension of space, from the fact that events $\{X_{z_{t-1}}+z_{t-1}\in J(z_{t-1})\cap J(w_{1,t})\}$, $\{X_{z_{t-1}}+z_{t-1}\in J(z_{t-1})\cap J(w_{2,t})\}$, \dots, $\{X_{z_{t-1}}+z_{t-1}\in J(z_{t-1})\cap J(w_{d,t})\}$ are independent and that $Y_z$ are i.i.d. random variables,
\begin{equation}
\label{independent}
\mathbb{P}(\bigcap_{i=1}^{d}\{\hat{\eta}_{z_{t-1}, w_{i, t}}=x_i\})=\prod_{i=1}^{d}\mathbb{P}(\hat{\eta}_{z_{t-1}, w_{i, t}}=x_i)
\end{equation}
where $x_1, x_2, \dots, x_{d}\in\{0,1\}$ and $w_{1, t}, w_{2, t}, \dots, w_{d, t}\in\mathbb{Z}^d$  s.t. $|z_{t-1}-w_{1, t}|=1,|z_{t-1}-w_{2, t}|=1,\dots, |z_{t-1}-w_{d, t}|=1$.

Then, we show (\ref{independence}) below.
Because $X_z$ and $Y_z$ are i.i.d. random variables, by (\ref{independent}),
\begin{equation*}
{\rm LHS \  of}  \ (\ref{independence})=\mathbb{P}(\bigcap_{i=1}^n C^{x_i}_{t_i, z, w})=\prod_{i=1}^n\mathbb{P}(C^{x_i}_{t_i, z, w})={\rm RHS \  of}  \ (\ref{independence}).
\end{equation*}
Next, we calculate the probability $\hat{p}$ of oriented percolation.

Let
\begin{equation*}
g(z)=
\begin{cases}
\frac{1}{2L},&\text{$-L\leq z\leq L$}\\
0,&\text{otherwise}
\end{cases}
\vspace{0.5cm}
\end{equation*}
be density of one-dimensional Uniform $U(-L,L)$ distribution.

If $L\geq \frac{1}{2}$, then\\
\vspace{0.3cm}
$\displaystyle\int_{[-L,L] \bigcap [-L-1,L-1]}g(z) dz=\int_{-L}^{L-1} \frac{1}{2L} dz=1- \frac{1}{2L}$\\
holds.

As the $d$-dimensional Uniform measure with density $g_d(z)$ is a product measure, note that the contributions to the product not in direction of $e_i(i=1,...,d)$ cancel.\\
By Definition\ref{openclosed}, calculating $\hat{p}$,
\begin{equation}
\begin{split}
\label{plambda}
\hat{p}&=\mathbb{P}(X_{z_{t-1}}+z_{t-1}\in J(z_{t-1})\cap J(w_t) ,Y_{w_t}+w_t\in J(z_{t-1})\cap J(w_t))\\
&=\mathbb{P}(X_{z_{t-1}}+z_{t-1}\in J(z_{t-1})\cap J(w_t))\mathbb{P}(Y_{w_t}+w_t\in J(z_{t-1})\cap J(w_t))\\
&=\left(1- \frac{1}{2L}\right)^2.
\end{split}
\end{equation}
\end{proof}
Next, “\!connected\!” is defined as follows.
\begin{defi}\label{connected}(connected)
For $z_{t-1},w_t\in\mathbb{Z}^d$, if the bond$\bigl<z_{t-1},w_t\bigl>$ is open, then we call $z_{t-1}\in\mathbb{Z}^d$ is directly connected to $w_t\in\mathbb{Z}^d$ and write $\displaystyle z_{t-1} \Rightarrow w_t$.
Also, we call $z\in\mathbb{Z}^d$ is connected to $w\in\mathbb{Z}^d$ and write $\displaystyle z \rightarrow w$, if there is a sequence $(z_l)^t_{l=1}$ of $\mathbb{Z}^d$ such that $z_1=z, \ z_t=w$ and $\displaystyle z_l \Rightarrow z_{l+1}$,\ $1\leq l \leq t-1$.
In particular, if $0$ connected to $z$ by $\gamma\in\Gamma_t$, where $\Gamma_t$ is defined in (\ref{vvgamma}), we write $0\xrightarrow{\gamma}z$.
Also,  if $0$ is connected to $z$ by sequence of i.i.d. random variables $\mathbf{X}:=(X_z)_{z\in\mathbb{Z}^d}$ and $\mathbf{Y}:=(Y_z)_{z\in\mathbb{Z}^d\setminus\{0\}}$, we write $0\xrightarrow[\mathbf{X}, \mathbf{Y}]{}z$.
\end{defi}
From above, we obtain the following diagram(relationship) about the relationship between the space of the perturbed lattice and the space of GOBP.
 \begin{displaymath}
 \xymatrix{(\Omega,\mathcal{F},\mathbb{P}) \ar[r]_-{\hat{\boldsymbol \eta}} \ar[d]_-{\mathbf{X}}&(\{0,1\}^{\mathbb{B}},\mathcal{B}(\{0,1\}^{\mathbb{B}}),P_p)& \\((\mathbb{R}^d)^{\mathbb{Z}^d},\mathcal{B}((\mathbb{R}^d)^{\mathbb{Z}^d}),\hat{\nu}) \ar[r]_-{\Pi}&(\mathcal{M}(\mathbb{R}^d),\mathcal{B}(\mathcal{M}(\mathbb{R}^d)),\mu)}
 \end{displaymath}
 where mapping $\Pi$ is
\begin{equation*}
\displaystyle\Pi:(\mathbb{R}^d)^{\mathbb{Z}^d}\longrightarrow \mathcal{M}(\mathbb{R}^d),
\end{equation*}
$\hat{\boldsymbol \eta}:=(\hat{\eta}_{t,z,w})_{(t,z,w)\in\mathbb{N}\times\mathbb{Z}^{d^{*}}_{\geq 0}\times\mathbb{Z}^{d^{*}}_{\geq 0}}$ and $\mathbf{X}$ is defined by Definition \ref{lattice}, where $\hat{\eta}_{t,z,w}$ is defined in Definition \ref{openclosed}.
\subsection{Tolerance for the Uniform perturbed lattice}
In this subsection, we show the mutually absolute continuity of the measure without one point and the original measure on a restricted set of spaces of the point process in $d\geq 4$.
Let $\displaystyle\mathbb{Z}^d_t:=\{z\in\mathbb{Z}_{\geq0}^d: |z|=t\}$. 
 Note that $\displaystyle\mathbb{Z}^d_t$ is the set of the point $z\in\mathbb{Z}^d$ at which a path starting at the origin arrives at time t.
 
Then, for $\gamma\in\Gamma_t$, we define
\begin{equation}
\displaystyle\Omega_{\gamma}(t, \mathbf{X},\mathbf{Y}):=\{0\xrightarrow[\mathbf{X}, \mathbf{Y}]{\gamma}z, \ z\in\mathbb{Z}^d_t \}\subset\Omega, \ \ \  \ \ \ \  \Omega(t, \mathbf{X}, \mathbf{Y}):=\bigcup_{\gamma\in\Gamma_t}\Omega_{\gamma}(t, \mathbf{X},\mathbf{Y}).
\end{equation}
Now, for $\gamma=(z_0, z_1, \dots, z_t)\in\Gamma_t$, let 
\begin{equation*}
\displaystyle A_{\gamma}(t):=\bigotimes_{z\in\mathbb{Z}^d} A_{\gamma, z}(t) \ \ \ {\rm and} \ \ \ \displaystyle B_{\gamma}(t):=\bigotimes_{z\in\mathbb{Z}^d} B_{\gamma, z}(t)
\end{equation*}
where for $k=0,1,\dots, t-1$,
\begin{equation*}
A_{\gamma, z}(t)=
\begin{cases}
J(z_k)\cap J(z_{k+1}),&z=z_k\in\{\gamma\}\\
J(z),&z\notin \{\gamma\} \ {\rm or} \ z=z_t,
\end{cases}
\end{equation*}
and
\begin{equation*}
B_{\gamma, z}(t)=
\begin{cases}
J(z_k)\cap J(z_{k+1}),&z=z_{k+1}\in\{\gamma\}\\
J(z),&z\notin \{\gamma\} \ {\rm or} \ z=z_0.
\end{cases}
\end{equation*}
Then, for $\omega\in\Omega$, we define
\begin{equation}
\begin{split}
 \label{C1}
C^{\omega}_{\gamma}(t,\mathbf{X},\mathbf{Y})&:=\{\mathbbm{x}\in(\mathbb{R}^d)^{\mathbb{Z}^d}:(\mathbbm{x}, \omega)\in A_{\gamma}(t)\times\Omega_{\gamma}(t, \mathbf{X},\mathbf{Y})\}\\
&=
\begin{cases}
A_{\gamma}(t),&\omega\in\Omega_{\gamma}(t, \mathbf{X},\mathbf{Y})\\
\phi,&\omega\notin\Omega_{\gamma}(t, \mathbf{X},\mathbf{Y})
\end{cases}
\end{split}
\end{equation}
and
\begin{equation}
\label{C2}
\begin{split}
\overline{C}^{\omega}_{\gamma}(t,\mathbf{X},\mathbf{Y})&:=\{\mathbbm{y}\in(\mathbb{R}^d)^{\mathbb{Z}^d}:(\mathbbm{y}, \omega)\in B_{\gamma}(t)\times\Omega_{\gamma}(t, \mathbf{X},\mathbf{Y})\}\\
&=
\begin{cases}
B_{\gamma}(t),&\omega\in\Omega_{\gamma}(t, \mathbf{X},\mathbf{Y})\\
\phi,&\omega\notin\Omega_{\gamma}(t, \mathbf{X},\mathbf{Y}).
\end{cases}
\end{split}
\end{equation}
Note that we can regard $C^{\omega}_{\gamma}(t,\mathbf{X},\mathbf{Y})$(resp. $\overline{C}^{\omega}_{\gamma}(t,\mathbf{X},\mathbf{Y})$) as a set of range of $\mathbb{Z}(\mathbf{X})$(resp. $\mathbb{Z}_0(\mathbf{Y})$) by Definition \ref{openclosed}.

Let $A$ be a Borel subset of $\mathbb{R}^d$. For $B\in\mathcal{B}(A)$, let $T(B):=\{z\in\mathbb{Z}^d : J(z)\cap B\neq\phi\}$.
Also, for $S\in 2^{T(B)}$, we define $C_S(B):=\{(x_j)_{j\in\mathbb{Z}^d}\in (\mathbb{R}^d)^{\mathbb{Z}^d}: x_j \in B (j\in S), x_j \notin B(j\notin S)\}$ and $\displaystyle C(B):=\bigcup_{S\in 2^{T(B)}}C_S(B)$.
Then, let $\mathcal{B}_A((\mathbb{R}^d)^{\mathbb{Z}^d})$ be topological Borel field on $C(B)$.

Now, we define the following measures.
\begin{defi}
\label{nu}
For $\Lambda\in\mathcal{B}_A((\mathbb{R}^d)^{\mathbb{Z}^d})$,\\
\centerline{$\displaystyle\hat{\nu}_t(\Lambda):=\frac{1}{(d\hat{p})^t}\sum_{\gamma\in\Gamma_t}\mathbb{P}(\{\omega\in\Omega : \mathbb{Z}(\mathbf{X})(\omega)\in\Lambda\cap C^{\omega}_{\gamma}(t,\mathbf{X},\mathbf{Y})\})$}
\centerline{$\displaystyle\hat{\nu}_{0,t}(\Lambda):=\frac{1}{(d\hat{p})^t}\sum_{\gamma\in\Gamma_t}\mathbb{P}(\{\omega\in\Omega : \mathbb{Z}_0(\mathbf{Y})(\omega)\in\Lambda\cap \overline{C}^{\omega}_{\gamma}(t,\mathbf{X},\mathbf{Y})\})$}\\
where $\hat{p}$ was defined by (\ref{plambda}) and the open and closed rule is given in Definition \ref{openclosed}.
\end{defi}
\begin{remark}
The reason for introducing the measures of Definition \ref{nu} is as follows.\\
For $\Lambda\in\mathcal{B}_A((\mathbb{R}^d)^{\mathbb{Z}^d})$,
\begin{equation*}
\mathbb{P}(\mathbb{Z}(\mathbf{X})\in\Lambda, \Omega_{\gamma}(t, \mathbf{X},\mathbf{Y}))=\mathbb{P}(\mathbb{Z}_0(\mathbf{Y})\in\Lambda, \Omega_{\gamma}(t, \mathbf{X},\mathbf{Y}))
\end{equation*}
holds. (This follows from (\ref{Nkey}) below.)
If $\Omega_{\gamma}(t, \mathbf{X},\mathbf{Y})\cap\Omega_{\gamma^{'}}(t, \mathbf{X},\mathbf{Y})=\phi$ ($\gamma, \gamma^{'}\in\Gamma_t$), then
\begin{equation}
\label{rema}
\mathbb{P}(\mathbb{Z}(\mathbf{X})\in\Lambda, \Omega(t, \mathbf{X},\mathbf{Y}))=\mathbb{P}(\mathbb{Z}_0(\mathbf{Y})\in\Lambda, \Omega(t, \mathbf{X},\mathbf{Y}))
\end{equation}
would hold.
However, because $\Omega_{\gamma}(t, \mathbf{X},\mathbf{Y})\cap\Omega_{\gamma^{'}}(t, \mathbf{X},\mathbf{Y})\neq\phi$, (\ref{rema}) does not hold.
That is to say, we take the sum by $\gamma\in\Gamma_t$, there is a duplication because it is $\Omega_{\gamma}(t, \mathbf{X},\mathbf{Y})\cap\Omega_{\gamma^{'}}(t, \mathbf{X},\mathbf{Y})\neq\phi$ ($\gamma, \gamma^{'}\in\Gamma_t$).

Intuitively, we want to caluculate $\displaystyle\lim_{t\xrightarrow{}\infty}\sum_{\gamma\in\Gamma_t}\mathbb{P}(\mathbb{Z}(\mathbf{X})\in\Lambda, \Omega_{\gamma}(t, \mathbf{X},\mathbf{Y}))$, but there is a problem that $\displaystyle\sum_{\gamma\in\Gamma_t}\mathbb{P}(\mathbb{Z}(\mathbf{X})\in\Lambda, \Omega_{\gamma}(t, \mathbf{X},\mathbf{Y}))$ may diverge due to duplication when we expanded the area $\Lambda$.
However, because we know that the order of the total number of paths $|N_t|$ is $\mathcal{O}(((d^{*}+1)p)^t)$ as $t\xrightarrow{}\infty$ by Theorem \ref{YNobuo} (the result by Yoshida \cite{Nobuo}, \cite{Yoshida}), we know the order of duplication when we expanded the area $\Lambda$.
As a result, we can consider the measures in Definition \ref{nu} and calculate their limit (\ref{v1}),(\ref{v2}).
From the above, in the following, we will use lemmas and propositions to see that the tolerance holds on a restricted set.
\end{remark}
Then, the following lemma holds.
\begin{lemma}
\label{First}
For $z\in\mathbb{Z}^d_t$, suppose that $z\notin[-(L+M),L+M]^d$ and $K$ is a compact subset of $\mathbb{R}^d$ with $K\subset[-M,M]^d$ where $L\in\mathbb{R}$ is the one used in (\ref{J(z)}) and $M\in\mathbb{R}$.
If $\Lambda\in\mathcal{B}_K((\mathbb{R}^d)^{\mathbb{Z}^d})$, then the following holds.\\
\centerline{$\displaystyle\hat{\nu}_t(\Lambda)=\hat{\nu}_{0,t}(\Lambda)$.}
\end{lemma}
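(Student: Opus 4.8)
The plan is to compare $\hat\nu_t$ and $\hat\nu_{0,t}$ path by path. Fix $\gamma=(z_0,\dots,z_t)\in\Gamma_t$; by hypothesis its endpoint $z=z_t$ satisfies $z_t\notin[-(L+M),L+M]^d$. First I would note that on the event $\Omega_\gamma(t,\mathbf{X},\mathbf{Y})$ the inclusion $\mathbb{Z}(\mathbf{X})\in A_\gamma(t)$ holds automatically: the off-path coordinates of $A_\gamma(t)$ require only $X_z+z\in J(z)$, which is true because $X_z\in[-L,L]^d$, while the on-path requirements $X_{z_k}+z_k\in J(z_k)\cap J(z_{k+1})$ are, by Definition~\ref{openclosed}, exactly the $\mathbf{X}$-part of the defining condition for the bonds of $\gamma$ to be open. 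Hence $\{\omega:\mathbb{Z}(\mathbf{X})(\omega)\in\Lambda\cap C^{\omega}_{\gamma}(t,\mathbf{X},\mathbf{Y})\}=\Omega_\gamma(t,\mathbf{X},\mathbf{Y})\cap\{\mathbb{Z}(\mathbf{X})\in\Lambda\}$, and in the same way $\{\omega:\mathbb{Z}_0(\mathbf{Y})(\omega)\in\Lambda\cap\overline{C}^{\omega}_{\gamma}(t,\mathbf{X},\mathbf{Y})\}=\Omega_\gamma(t,\mathbf{X},\mathbf{Y})\cap\{\mathbb{Z}_0(\mathbf{Y})\in\Lambda\}$. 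Summing over $\gamma\in\Gamma_t$ and dividing by $(d\hat p)^t$, the lemma reduces to the per-path identity
\[
\mathbb{P}\bigl(\{\mathbb{Z}(\mathbf{X})\in\Lambda\}\cap\Omega_\gamma(t,\mathbf{X},\mathbf{Y})\bigr)=\mathbb{P}\bigl(\{\mathbb{Z}_0(\mathbf{Y})\in\Lambda\}\cap\Omega_\gamma(t,\mathbf{X},\mathbf{Y})\bigr).
\]

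Next I would split off an independent factor. Write $\Omega_\gamma(t,\mathbf{X},\mathbf{Y})=E_X\cap E_Y$, where $E_X:=\bigcap_{k=0}^{t-1}\{X_{z_k}+z_k\in J(z_k)\cap J(z_{k+1})\}$ is $\sigma(\mathbf{X})$-measurable and $E_Y:=\bigcap_{k=0}^{t-1}\{Y_{z_{k+1}}+z_{k+1}\in J(z_k)\cap J(z_{k+1})\}$ is $\sigma(\mathbf{Y})$-measurable. Since $z_0,\dots,z_{t-1}$ are distinct, as are $z_1,\dots,z_t$, and since $\mathbb{P}(X_u+u\in J(u)\cap J(w))=1-\frac{1}{2L}$ whenever $|u-w|=1$ (the computation in the proof of Lemma~\ref{indepen}), we get $\mathbb{P}(E_X)=\mathbb{P}(E_Y)=(1-\frac{1}{2L})^t$. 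Using that $\mathbf{X}$ and $\mathbf{Y}$ are independent, the per-path identity is equivalent to
\[
\mathbb{E}\bigl[\1_\Lambda(\mathbb{Z}(\mathbf{X}))\,\1_{E_X}\bigr]=\mathbb{E}\bigl[\1_\Lambda(\mathbb{Z}_0(\mathbf{Y}))\,\1_{E_Y}\bigr].
\]

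The core step is to prove this last equality by a measure-preserving rearrangement that transports the ``vacancy'' at the origin along $\gamma$ to $z_t$. Each side is an integral in finitely many coordinates --- the left-hand side in $(X_j)_{j\in T(K)\cup\{z_0,\dots,z_{t-1}\}}$, the right-hand side in $(Y_j)_{j\in(T(K)\setminus\{0\})\cup\{z_1,\dots,z_t\}}$ --- and these two coordinate blocks are equinumerous once one uses $z_t\notin T(K)$. I would use the coordinate bijection $Y_j=X_j$ for $j\in T(K)\setminus\{\gamma\}$ and, for $0\le k\le t-1$, $z_{k+1}+Y_{z_{k+1}}=z_k+X_{z_k}$, i.e.\ $Y_{z_{k+1}}=X_{z_k}-(z_{k+1}-z_k)$. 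On the support of $\1_{E_X}$ this is a measure-preserving bijection onto the support of $\1_{E_Y}$: writing $z_{k+1}=z_k+e_{i_k}$, the constraint $X_{z_k}+z_k\in J(z_k)\cap J(z_{k+1})$ confines $(X_{z_k})_{i_k}$ to $[1-L,L]$, translation by $-e_{i_k}$ carries this interval isometrically onto $[-L,L-1]\subset[-L,L]$, which is exactly the constraint $Y_{z_{k+1}}+z_{k+1}\in J(z_k)\cap J(z_{k+1})$, and in every other coordinate the map is the identity. Under this transport all point locations are preserved, so the configuration $\mathbb{Z}_0(\mathbf{Y})$ that results is nothing but $\mathbb{Z}(\mathbf{X})$ with the single point at index $z_t$ removed. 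Finally, the hypothesis $z_t\notin[-(L+M),L+M]^d$, combined with $T(K)\subset K+[-L,L]^d\subset[-(L+M),L+M]^d$, gives $z_t\notin T(K)$, i.e.\ $J(z_t)\cap K=\varnothing$; hence the removed point is irrelevant to any $\Lambda\in\mathcal{B}_K((\mathbb{R}^d)^{\mathbb{Z}^d})$, so $\1_\Lambda$ is carried to $\1_\Lambda$ and the displayed equality --- and with it the lemma --- follows.

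The hard part is this third step: one has to verify that the individual one-vertex translations genuinely splice together into a single measure-preserving bijection between the two coordinate blocks, which calls for careful bookkeeping at the two ends of $\gamma$ --- the origin $z_0=0$, at which $\mathbb{Z}_0(\mathbf{Y})$ carries no point, and the far endpoint $z_t$, whose exclusion from $T(K)$ is exactly the role played by the hypothesis --- and along the overlap $\{\gamma\}\cap T(K)$, so that the indicator $\1_\Lambda$ is indeed transported to itself.
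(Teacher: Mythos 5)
Your proof is correct and follows the same underlying idea as the paper's: shift the perturbations one step along $\gamma$, so that the vacancy at the origin is transported to the far endpoint $z_t$. But you do this more carefully, and the comparison is worth recording. The paper introduces the auxiliary field $X^\gamma$ and the shifted lattice $\mathbb{Z}_\gamma(\mathbf{X'})$, and asserts that $\mathbb{Z}_\gamma(\mathbf{X'})$ ``has the same law as $\mathbb{Z}_0(\mathbf{Y})$''; for a \emph{finite} $\gamma\in\Gamma_t$ this claim is slightly off, because the index map $z\mapsto\gamma(z)$ is not a bijection of $\mathbb{Z}^d$ onto $\mathbb{Z}^d\setminus\{0\}$ (the endpoint $z_t$ is double-covered), so the two configurations differ by the single extra point $z_t+X_{z_t}$. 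The paper's proof never invokes the hypotheses $z\notin[-(L+M),L+M]^d$ and $K\subset[-M,M]^d$, which is precisely what is needed to see that this extra point is invisible to any $\Lambda\in\mathcal{B}_K((\mathbb{R}^d)^{\mathbb{Z}^d})$. Your argument makes this step explicit: you build the coordinate bijection $Y_{z_{k+1}}=X_{z_k}-(z_{k+1}-z_k)$, $Y_j=X_j$ off $\gamma$, check coordinate by coordinate that it is measure-preserving between the constrained blocks $E_X$ and $E_Y$, observe that every point location is preserved except $z_t+X_{z_t}$, and then use $T(K)\subset K+[-L,L]^d\subset[-(L+M),L+M]^d$ to conclude $z_t\notin T(K)$, so the removed point lies outside $K$ and $\1_\Lambda$ is transported to itself. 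Your preliminary reduction --- that on $\Omega_\gamma$ one automatically has $\mathbb{Z}(\mathbf{X})\in A_\gamma(t)$ and $\mathbb{Z}_0(\mathbf{Y})\in B_\gamma(t)$, and the subsequent factorization $\Omega_\gamma=E_X\cap E_Y$ with $\mathbb{P}(E_X)=\mathbb{P}(E_Y)$ --- is also a nice way to strip the problem down to a single splice. In short, same idea, but your write-up closes the gap left open in the published proof and makes transparent why the geometric hypothesis on $z$ and $K$ is there.
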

\begin{proof}
For $I=\mathbb{N}\cup\{0\}$, let $\gamma=(z_i)_{i\in I}$ be an oriented walk on $\mathbb{Z}^d$ from the $0\in\mathbb{Z}^d$.(i.e. $z_0=0$ and $z_i - z_{i-1}$ is a standard basis vector.)

We define $X^{\gamma}=(X^{\gamma}_z)_{z \in \mathbb{Z}^d}$ to be a field of independent random variables distributed as
\begin{equation*}
X^{\gamma}_z\overset{d}{=}
\begin{cases}
X_z+z_{i}-z_{i-1}, & \text{ $z \in \{\gamma\},z=z_{i-1}$}\\
X_z, & \text{ $z \notin \{\gamma\}$}.
\end{cases}
\vspace{0.5cm}
\end{equation*}
By construction, the point $z_{i-1}+X^{\gamma}_{z_{i-1}}$ has the same distribution as $z_{i}+X_{z_{i}}$ so changing the perturbations in this way has the effect of shifting the points on $\gamma$ one step along the path.

By definition of $\mathbb{Z}_{\gamma}(\mathbf{X^{'}})$ and $\mathbb{Z}_0(\mathbf{Y})$, it is clear that for every vertex in $\mathbb{Z}^d$ except $0\in\mathbb{Z}^d$, there is a perturbed point.
Thus we see that $\mathbb{Z}_{\gamma}(\mathbf{X^{'}})$ has the same law as $\mathbb{Z}_0(\mathbf{Y})$. (This idea referred to \cite{Peres}.) 

Therefore,
\begin{equation*}
\mathbb{P}(\{\omega\in\Omega : \mathbb{Z}_{\gamma}(\mathbf{X^{'}})(\omega)\in\Lambda\cap C^{\omega}_{\gamma}(t,\mathbf{X},\mathbf{Y})\})=\mathbb{P}(\{\omega\in\Omega : \mathbb{Z}_0(\mathbf{Y})(\omega)\in\Lambda\cap \overline{C}^{\omega}_{\gamma}(t,\mathbf{X},\mathbf{Y})\})
\end{equation*}
holds.

On the other hand, by Definition \ref{connected} , for $\gamma\in\Gamma_t$, if $\gamma$ is open, then
\begin{equation*}
\begin{split}
\mathbb{P}(X_{z_{t-1}}+z_{t-1}\in J(z_{t-1})\cap J(z_t))&=\mathbb{P}(X^{'}_{z_{t-1}}+\gamma(z_{t-1})\in J(z_{t-1})\cap J(z_t))
\end{split}
\end{equation*}
holds.

Also, for $z\notin\{\gamma\}$,
\begin{equation*}
X^{'}_z+\gamma(z)=X^{'}_z+z\overset{d}{=}X_z+z
\end{equation*}
holds.

Therefore, by definition of (\ref{C1}) and (\ref{C2}),
\begin{equation*}
\begin{split}
\mathbb{P}(\{\omega\in\Omega : \mathbb{Z}(\mathbf{X})(\omega)\in\Lambda\cap C^{\omega}_{\gamma}(t,\mathbf{X},\mathbf{Y})\})&=\mathbb{P}(\{\omega\in\Omega : \mathbb{Z}_{\gamma}(\mathbf{X^{'}})(\omega)\in\Lambda\cap C^{\omega}_{\gamma}(t,\mathbf{X},\mathbf{Y})\})\\
\end{split}
\end{equation*}
is established.\\
Therefore,
\begin{equation}
\label{Nkey}
\mathbb{P}(\{\omega\in\Omega : \mathbb{Z}(\mathbf{X})(\omega)\in\Lambda \cap C^{\omega}_{\gamma}(t,\mathbf{X},\mathbf{Y})\})=\mathbb{P}(\{\omega\in\Omega : \mathbb{Z}_0(\mathbf{Y})(\omega)\in\Lambda \cap \overline{C}^{\omega}_{\gamma}(t,\mathbf{X},\mathbf{Y})\}).
\end{equation}
Consequently,
\begin{equation*}
\begin{split}
\hat{\nu}_t(\Lambda)&=\hat{\nu}_{0,t}(\Lambda)
\end{split}
\end{equation*}
is established.
\end{proof}
Now, we define total variation distance on $((\mathbb{R}^d)^{\mathbb{Z}^d},\mathcal{B}((\mathbb{R}^d)^{\mathbb{Z}^d}))$.
For two measures $\hat{\nu}$ and $\hat{\nu}^{'}$ on $(\mathbb{R}^d)^{\mathbb{Z}^d}$,
$$||\hat{\nu}-\hat{\nu}^{'}||_{dTV}:=\sup_{A\in \mathcal{B}((\mathbb{R}^d)^{\mathbb{Z}^d})}|\hat{\nu}(A)-\hat{\nu}^{'}(A)|$$
where, for $\forall A\in \mathcal{B}((\mathbb{R}^d)^{\mathbb{Z}^d})$, $0\leq \hat{\nu}(A), \hat{\nu}^{'}(A)< \infty.$

Next, we  define the following.
\begin{equation}
\label{v1}
\displaystyle\hat{\nu}^{rest}(\cdot):=\lim_{t\rightarrow\infty}\hat{\nu}_t(\cdot).
\end{equation}
\begin{equation}
\label{v2}
\displaystyle\hat{\nu}^{rest}_0(\cdot):=\lim_{t\rightarrow\infty}\hat{\nu}_{0,t}(\cdot).
\end{equation}
\begin{remark}
We will show below that (\ref{v1}) and (\ref{v2}) exist and are well-defined.
First, we show that (\ref{v1}) and (\ref{v2}) exist.

When $s, t \xrightarrow{}\infty$,
\begin{equation*}
||\hat{\nu}_s\ - \hat{\nu}_t||_{dTV}\xrightarrow{}0
\end{equation*}
holds.
Thus, because of completeness of the space $(\mathbb{R}^d)^{\mathbb{Z}^d}$, (\ref{v1}) exists.
Similarly, (\ref{v2}) exists.

Next, we show that (\ref{v1}) and (\ref{v2}) are well-defined.

Let $\displaystyle\hat{\nu}_t\sim\hat{\nu}^{'}_t\overset{\rm def}\iff\lim_{t\xrightarrow{}\infty}||\hat{\nu}_t\ - \hat{\nu}^{'}_t||_{dTV}=0$.\\
Then, for $\hat{\nu}_t\sim\hat{\nu}^{'}_t$, 
\begin{equation*}
\forall\varepsilon>0, \ \exists N_1\in\mathbb{N} \ s.t. \ \forall t\in\mathbb{N}, \ t>N_1 \ \Rightarrow ||\hat{\nu}_t\ - \hat{\nu}^{'}_t||_{dTV}<\frac{\varepsilon}{2}.
\end{equation*}
Also, by (\ref{v1}),
\begin{equation*}
\forall\varepsilon>0, \ \exists N_2\in\mathbb{N} \ s.t. \ \forall t\in\mathbb{N}, \ t>N_2 \ \Rightarrow ||\hat{\nu}_t\ - \hat{\nu}^{rest}||_{dTV}<\frac{\varepsilon}{2}.
\end{equation*}
Now, let $N=\max\{N_1, N_2\}$, for $t>N$ and $\hat{\nu}^{'}_t \ s.t. \ \hat{\nu}_t\sim\hat{\nu}^{'}_t$,
\begin{equation*}
||\hat{\nu}^{'}_t\ - \hat{\nu}^{rest}||_{dTV}\leq ||\hat{\nu}_t\ - \hat{\nu}^{'}_t||_{dTV}+||\hat{\nu}_t\ - \hat{\nu}^{rest}||_{dTV}<\frac{\varepsilon}{2}+\frac{\varepsilon}{2}=\varepsilon.
\end{equation*}
Therefore, (\ref{v1}) is well-defined.
Similarly, (\ref{v2}) is well-defined.
\end{remark}
For the readability of the following lemma and proposition, we define the following.\\
\centerline{$\displaystyle\Omega(|\overline{N}_{\infty}|):=\{\omega\in\Omega \ : \ |\overline{N}_{\infty}(\omega)|>0\}$,}\\
where $|\overline{N}_{\infty}|$ is defined in (\ref{N}).

Then, the following lemma holds.
\begin{lemma}
\label{KEY}
For $d\geq 4$ (i.e. $d^{*}\geq 3$), suppose that $\mathbb{P}(\Omega(|\overline{N}_{\infty}|))>0$.\\
Then the following holds.
\begin{equation}
\label{rest}
\displaystyle\hat{\nu}^{rest}(\cdot)=\hat{\nu}^{rest}_0(\cdot).
\end{equation}
\end{lemma}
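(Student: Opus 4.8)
The strategy is to read off (\ref{rest}) from Lemma~\ref{First} by letting the level parameter $t\to\infty$, and then to promote the resulting identity from ``local'' configuration events to all of $\mathcal{B}((\mathbb{R}^d)^{\mathbb{Z}^d})$ by a uniqueness-of-measures argument. The hypotheses $d\geq 4$ and $\mathbb{P}(\Omega(|\overline{N}_\infty|))>0$ enter only through the remark preceding the statement: by Theorem~\ref{YNobuo} the normalization $(d\hat{p})^t=((d^{*}+1)\hat{p})^t$ is exactly the one for which $\{\hat{\nu}_t\}$ and $\{\hat{\nu}_{0,t}\}$ are Cauchy in $\|\cdot\|_{dTV}$ and for which the limits $\hat{\nu}^{rest}$ in (\ref{v1}) and $\hat{\nu}^{rest}_0$ in (\ref{v2}) are finite, non-degenerate measures; I take these facts as given.

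First I would fix $M\in\mathbb{N}$, a compact set $K\subset[-M,M]^d$, and $\Lambda\in\mathcal{B}_K((\mathbb{R}^d)^{\mathbb{Z}^d})$. The point is that the hypothesis of Lemma~\ref{First} becomes automatic for all large $t$: if $z\in\mathbb{Z}^d_t$ then $z\in\mathbb{Z}^d_{\geq 0}$ and $\sum_{i=1}^{d}z_i=|z|=t$, so as soon as $t>d(L+M)=:t_0$ at least one coordinate of $z$ exceeds $L+M$, whence $z\notin[-(L+M),L+M]^d$. Thus Lemma~\ref{First} gives $\hat{\nu}_t(\Lambda)=\hat{\nu}_{0,t}(\Lambda)$ for every $t>t_0$. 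Because $\|\hat{\nu}_t-\hat{\nu}^{rest}\|_{dTV}\to 0$ and $\|\hat{\nu}_{0,t}-\hat{\nu}^{rest}_0\|_{dTV}\to 0$ force $\hat{\nu}_t(\Lambda)\to\hat{\nu}^{rest}(\Lambda)$ and $\hat{\nu}_{0,t}(\Lambda)\to\hat{\nu}^{rest}_0(\Lambda)$, letting $t\to\infty$ yields
\begin{equation*}
\hat{\nu}^{rest}(\Lambda)=\hat{\nu}^{rest}_0(\Lambda)\qquad\text{for every compact }K\subset\mathbb{R}^d\text{ and every }\Lambda\in\mathcal{B}_K((\mathbb{R}^d)^{\mathbb{Z}^d}).
\end{equation*}

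To reach an arbitrary $\Lambda\in\mathcal{B}((\mathbb{R}^d)^{\mathbb{Z}^d})$ I would argue as follows. The family $\mathcal{A}:=\bigcup_{M\geq 1}\mathcal{B}_{[-M,M]^d}((\mathbb{R}^d)^{\mathbb{Z}^d})$ is an increasing union of sub-$\sigma$-algebras, hence an algebra and in particular a $\pi$-system; and, since a configuration in the support of these measures is locally finite, every cylinder event of $(\mathbb{R}^d)^{\mathbb{Z}^d}$ agrees, off a null set, with a member of some $\mathcal{B}_{[-M,M]^d}$, so that $\sigma(\mathcal{A})$ is (modulo null sets) all of $\mathcal{B}((\mathbb{R}^d)^{\mathbb{Z}^d})$. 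Both $\hat{\nu}^{rest}$ and $\hat{\nu}^{rest}_0$ are finite measures of the same total mass, namely $\hat{\nu}^{rest}((\mathbb{R}^d)^{\mathbb{Z}^d})=\lim_t\hat{\nu}_t((\mathbb{R}^d)^{\mathbb{Z}^d})=\lim_t\mathbb{E}\bigl[|\overline{N}_t|\bigr]=1$ by the martingale property in Theorem~\ref{YNobuo}, and by the previous step they coincide on $\mathcal{A}$. The uniqueness theorem for finite measures that agree on a generating $\pi$-system and on the whole space then gives $\hat{\nu}^{rest}=\hat{\nu}^{rest}_0$ on $\mathcal{B}((\mathbb{R}^d)^{\mathbb{Z}^d})$, which is (\ref{rest}).

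All the genuine probabilistic content --- the shift-of-perturbation coupling behind (\ref{Nkey}) and the martingale convergence for $|\overline{N}_t|$ in Theorem~\ref{YNobuo} --- is already in hand, so what remains is measure-theoretic bookkeeping. I expect the main, still modest, obstacle to be this last extension step: pinning down, from the somewhat informally stated definition of $\mathcal{B}_A((\mathbb{R}^d)^{\mathbb{Z}^d})$, that the windows $\mathcal{B}_{[-M,M]^d}$ genuinely form an increasing generating $\pi$-system, and confirming that the $\|\cdot\|_{dTV}$-limits $\hat{\nu}^{rest}$ and $\hat{\nu}^{rest}_0$ are bona fide countably additive measures --- the latter because the finite signed measures on $((\mathbb{R}^d)^{\mathbb{Z}^d},\mathcal{B}((\mathbb{R}^d)^{\mathbb{Z}^d}))$ form a Banach space under $\|\cdot\|_{dTV}$ and a $\|\cdot\|_{dTV}$-limit of probability measures is again a probability measure.
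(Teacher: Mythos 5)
Your proof is correct and shares the same core step as the paper's proof---namely, reduce (\ref{rest}) to the identity $\hat{\nu}_t(\Lambda)=\hat{\nu}_{0,t}(\Lambda)$ of Lemma~\ref{First} for $\Lambda$ in a ``local'' class $\mathcal{B}_K((\mathbb{R}^d)^{\mathbb{Z}^d})$, then pass to the $\|\cdot\|_{dTV}$-limit---but you extend differently, and you fill in a step the paper passes over.

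The paper invokes an approximation theorem: for every $C\in\mathcal{B}((\mathbb{R}^d)^{\mathbb{Z}^d})$ and $\epsilon>0$ there is $\Lambda\in\mathcal{B}_K((\mathbb{R}^d)^{\mathbb{Z}^d})$ with $\hat{\nu}^{rest}(C\bigtriangleup\Lambda)$ and $\hat{\nu}^{rest}_0(C\bigtriangleup\Lambda)$ both small; it then applies Lemma~\ref{First} to $\Lambda$ and lets $t\to\infty$. You instead show the equality on the increasing family $\mathcal{A}=\bigcup_M\mathcal{B}_{[-M,M]^d}$ and appeal to uniqueness of finite measures on a generating $\pi$-system, pinning the common total mass at $1$ via the martingale normalization from Theorem~\ref{YNobuo}. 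The two routes are interchangeable pieces of measure-theoretic bookkeeping, and both rest on the (here somewhat informally stated) fact that the windows $\mathcal{B}_K$ exhaust $\mathcal{B}((\mathbb{R}^d)^{\mathbb{Z}^d})$ up to null sets; neither proof gives that fact much scrutiny, so your argument carries no additional burden there.

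What you genuinely add is the explicit verification that Lemma~\ref{First} is applicable once $t$ is large: for $z\in\mathbb{Z}^d_t$ we have $z\in\mathbb{Z}^d_{\geq 0}$ with $\sum_i z_i=t$, so once $t>d(L+M)$ every such $z$ lies outside $[-(L+M),L+M]^d$ by pigeonhole. The paper simply quotes Lemma~\ref{First} without checking its geometric hypothesis; your observation that it holds for all $t>t_0$, together with the remark that the eventual equality $\hat{\nu}_t(\Lambda)=\hat{\nu}_{0,t}(\Lambda)$ survives the $\|\cdot\|_{dTV}$-limit, is exactly the right way to close that gap.
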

\begin{proof}
We use the approximation theorem\\
\centerline{$\forall\epsilon>0,\forall C\in\mathcal{B}((\mathbb{R}^d)^{\mathbb{Z}^d}),\exists\Lambda\in\mathcal{B}_K((\mathbb{R}^d)^{\mathbb{Z}^d}) \ s.t. \ \hat{\nu}^{rest}(C\bigtriangleup\Lambda)<\epsilon, \ \hat{\nu}^{rest}_0(C\bigtriangleup\Lambda)<\epsilon$.}\\
Therefore,  we only need to show that $\hat{\nu}^{rest}(\Lambda)=\hat{\nu}^{rest}_0(\Lambda)$ holds.

Now, by Lemma \ref{First}, the following holds.\\
\centerline{$\displaystyle\hat{\nu}_t(\Lambda)=\hat{\nu}_{0,t}(\Lambda)$.}

Consequently, $\hat{\nu}^{rest}(\Lambda)=\hat{\nu}^{rest}_0(\Lambda)$ holds.

From above,\\
\centerline{$\displaystyle\hat{\nu}^{rest}(\cdot)=\hat{\nu}^{rest}_0(\cdot)$}\\
holds.
\end{proof}
Now, we define
\begin{equation*}
\begin{split}
\Lambda_{\gamma}(t,\Omega_{\gamma}(t, \mathbf{X}, \mathbf{Y}))&:=\{x\in \mathcal{M}(\mathbb{R}^d):(x, \omega)\in\Pi(A_{\gamma}(t))\times\Omega_{\gamma}(t, \mathbf{X}, \mathbf{Y})\}\\
&=\{y\in \mathcal{M}(\mathbb{R}^d):(y, \omega)\in\Pi(B_{\gamma}(t))\times\Omega_{\gamma}(t, \mathbf{X}, \mathbf{Y})\}.
\end{split}
\end{equation*}
Then, by definition of (\ref{C1}) and (\ref{C2}), note that the following holds.\\
\centerline{$\Lambda_{\gamma}(t,\Omega_{\gamma}(t, \mathbf{X}, \mathbf{Y}))=\Pi(C^{\omega}_{\gamma}(t,\mathbf{X},\mathbf{Y}))=\Pi(\overline{C}^{\omega}_{\gamma}(t,\mathbf{X},\mathbf{Y}))$.}

Now, we define total variation distance on $(\mathcal{M}(\mathbb{R}^d),\mathcal{B}(\mathcal{M}(\mathbb{R}^d)))$.
For two measures $\mu$ and $\mu^{'}$ on $\mathcal{M}(\mathbb{R}^d)$,
$$||\mu-\mu^{'}||_{TV}:=\sup_{A\in \mathcal{B}(\mathcal{M}(\mathbb{R}^d))}|\mu(A)-\mu^{'}(A)|$$
where, for $\forall A\in\mathcal{B}(\mathcal{M}(\mathbb{R}^d))$, $0\leq \mu(A), \mu^{'}(A)< \infty.$

Next, we define the following measure.
\begin{defi}
Suppose that
\begin{equation*}
\displaystyle\mu_t(\cdot):=\frac{1}{(d\hat{p})^t}\sum_{\gamma\in\Gamma_t}\mathbb{P}(\Pi(\mathbb{Z}(\mathbf{X}))\in\cdot\cap\Lambda_{\gamma}(t,\Omega_{\gamma}(t, \mathbf{X}, \mathbf{Y})))
\end{equation*}
and
\begin{equation*}
\displaystyle\mu_{0,t}(\cdot):=\frac{1}{(d\hat{p})^t}\sum_{\gamma\in\Gamma_t}\mathbb{P}(\Pi(\mathbb{Z}_0(\mathbf{Y}))\in\cdot\cap\Lambda_{\gamma}(t,\Omega_{\gamma}(t, \mathbf{X}, \mathbf{Y}))).
\end{equation*}
Then, 
\begin{equation}
\label{u1}
\displaystyle\mu^{rest}(\cdot):=\lim_{t\rightarrow\infty}\mu_t(\cdot)
\end{equation}
\begin{equation}
\label{u2}
\displaystyle\mu^{rest}_0(\cdot):=\lim_{t\rightarrow\infty}\mu_{0,t}(\cdot)
\end{equation}
where $\hat{p}$ was defined by (\ref{plambda}).
\end{defi}
\begin{remark}
We will show below that (\ref{u1}) and (\ref{u2}) exist and are well-defined.
First, we show that (\ref{u1}) and (\ref{u2}) exist.

When $s, t \xrightarrow{}\infty$,
\begin{equation*}
||\mu_s\ - \mu_t||_{TV}\xrightarrow{}0
\end{equation*}
holds.
Thus, since $\mathcal{M}(\mathbb{R}^d)$ is Polish space, (\ref{u1}) exists.
Similarly, (\ref{u2}) exists.

Next, we show that (\ref{u1}) and (\ref{u2}) are well-defined.

Let $\displaystyle\mu_t\sim\mu^{'}_t\overset{\rm def}\iff\lim_{t\xrightarrow{}\infty}||\mu_t\ - \mu^{'}_t||_{TV}=0$.
Then, for $\mu_t\sim\mu^{'}_t$, 
\begin{equation*}
\forall\varepsilon>0, \ \exists N_1\in\mathbb{N} \ s.t. \ \forall t\in\mathbb{N}, \ t>N_1 \ \Rightarrow ||\mu_t\ - \mu^{'}_t||_{TV}<\frac{\varepsilon}{2}.
\end{equation*}
Also, by (\ref{u1}),
\begin{equation*}
\forall\varepsilon>0, \ \exists N_2\in\mathbb{N} \ s.t. \ \forall t\in\mathbb{N}, \ t>N_2 \ \Rightarrow ||\mu_t\ - \mu^{rest}||_{TV}<\frac{\varepsilon}{2}.
\end{equation*}
Now, let $N=\max\{N_1, N_2\}$, for $t>N$ and $\mu^{'}_t \ {\rm s.t.} \ \mu_t\sim\mu^{'}_t$,
\begin{equation*}
||\mu^{'}_t\ - \mu^{rest}||_{TV}\leq ||\mu_t\ - \mu^{'}_t||_{TV}+||\mu_t\ - \mu^{rest}||_{TV}<\frac{\varepsilon}{2}+\frac{\varepsilon}{2}=\varepsilon.
\end{equation*}
Therefore, (\ref{u1}) is well-defined.
Similarly, (\ref{u2}) is well-defined.
\end{remark}
Now, we show that on a restricted set of spaces of the point process the mutually absolute continuity of the measure without one point and the original measure.
\begin{prop}
\label{mu}
For $d\geq 4$ (i.e. $d^{*}\geq 3$), suppose that $\mathbb{P}(\Omega(|\overline{N}_{\infty}|))>0$.\\
Then, the following holds.\\
\centerline{$\mu^{rest}$ and $\mu^{rest}_0$ are mutually absolutely continuous.}
\end{prop}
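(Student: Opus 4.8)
The plan is to transport Lemma \ref{KEY} along the projection $\Pi\colon(\mathbb{R}^d)^{\mathbb{Z}^d}\to\mathcal{M}(\mathbb{R}^d)$; in fact I would prove the stronger assertion $\mu^{rest}=\mu^{rest}_0$, which trivially yields mutual absolute continuity (Radon--Nikodym density identically $1$). The hypotheses $d\geq4$ and $\mathbb{P}(\Omega(|\overline{N}_{\infty}|))>0$ enter only through Lemma \ref{KEY}.

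First I would show that for every $t$ the measures $\mu_t$ and $\mu_{0,t}$ are the push-forwards of $\hat{\nu}_t$ and $\hat{\nu}_{0,t}$ under $\Pi$, i.e. $\mu_t=\hat{\nu}_t\circ\Pi^{-1}$ and $\mu_{0,t}=\hat{\nu}_{0,t}\circ\Pi^{-1}$. The key observation is that on the event $\Omega_{\gamma}(t,\mathbf{X},\mathbf{Y})$ the configuration $\mathbb{Z}(\mathbf{X})$ automatically lies in the product set $A_{\gamma}(t)$: one has $z+X_z\in J(z)$ for every $z$ by construction, while the extra constraints $z_k+X_{z_k}\in J(z_k)\cap J(z_{k+1})$ at the vertices of $\gamma$ are precisely those encoded in $\Omega_{\gamma}(t,\mathbf{X},\mathbf{Y})$ through Definition \ref{openclosed}; symmetrically $\mathbb{Z}_0(\mathbf{Y})\in B_{\gamma}(t)$ there. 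Combined with the identity $\Lambda_{\gamma}(t,\Omega_{\gamma}(t,\mathbf{X},\mathbf{Y}))=\Pi(C^{\omega}_{\gamma}(t,\mathbf{X},\mathbf{Y}))=\Pi(\overline{C}^{\omega}_{\gamma}(t,\mathbf{X},\mathbf{Y}))$ and the fact that the normalising factors $(d\hat{p})^t$ coincide, this gives, for every $B\in\mathcal{B}(\mathcal{M}(\mathbb{R}^d))$ and every $\gamma\in\Gamma_t$,
\begin{equation*}
\{\Pi(\mathbb{Z}(\mathbf{X}))\in B\cap\Lambda_{\gamma}(t,\Omega_{\gamma}(t,\mathbf{X},\mathbf{Y}))\}=\{\mathbb{Z}(\mathbf{X})\in\Pi^{-1}(B)\cap C^{\omega}_{\gamma}(t,\mathbf{X},\mathbf{Y})\},
\end{equation*}
hence $\mu_t(B)=\hat{\nu}_t(\Pi^{-1}(B))$ and, likewise, $\mu_{0,t}(B)=\hat{\nu}_{0,t}(\Pi^{-1}(B))$.

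Next, since $\Pi$ is measurable, push-forward does not increase total variation distance, so from $\|\hat{\nu}_t-\hat{\nu}^{rest}\|_{dTV}\to0$ and $\|\hat{\nu}_{0,t}-\hat{\nu}^{rest}_0\|_{dTV}\to0$ (the existence of these limits being recorded in the remark following (\ref{v2})) I get $\mu^{rest}=\Pi_{*}\hat{\nu}^{rest}$ and $\mu^{rest}_0=\Pi_{*}\hat{\nu}^{rest}_0$. Lemma \ref{KEY} then gives $\hat{\nu}^{rest}=\hat{\nu}^{rest}_0$, so $\mu^{rest}=\Pi_{*}\hat{\nu}^{rest}=\Pi_{*}\hat{\nu}^{rest}_0=\mu^{rest}_0$; in particular $\mu^{rest}$ and $\mu^{rest}_0$ are mutually absolutely continuous.

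I expect the first step to be the main obstacle: one has to be careful in passing from the labelled space $(\mathbb{R}^d)^{\mathbb{Z}^d}$ to the unlabelled configuration space $\mathcal{M}(\mathbb{R}^d)$, verifying in particular that on $\Omega_{\gamma}(t,\mathbf{X},\mathbf{Y})$ the event $\{\Pi(\mathbb{Z}(\mathbf{X}))\in\Pi(A_{\gamma}(t))\}$ coincides with $\{\mathbb{Z}(\mathbf{X})\in A_{\gamma}(t)\}$ and is not strictly larger, and that the fact that $\hat{\nu}_t$ is initially only defined on $\mathcal{B}_A((\mathbb{R}^d)^{\mathbb{Z}^d})$ causes no difficulty once one passes to the limit $\hat{\nu}^{rest}$ --- the latter handled, exactly as in the proof of Lemma \ref{KEY}, via the approximation theorem. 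Everything else is then formal.
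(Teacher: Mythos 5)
Your route is genuinely different from the paper's, and in my view cleaner. The paper does not try to show $\mu_t=\hat\nu_t\circ\Pi^{-1}$: instead it completes the spaces, introduces a $\sigma$-algebra $\mathcal{G}$ making $\Pi$-images measurable, and then produces constants $c_m,c_n$ controlling the inflation of $\hat\nu$ under the \emph{saturation} operator $E\mapsto\Pi^{-1}(\Pi(E))$, i.e.\ shows $\hat\nu(\Pi^{-1}(\Pi(E)))\le c_m\hat\nu(E)$ for enough $E\subset C^{\omega}_{\gamma}(t,\mathbf{X},\mathbf{Y})$. This only yields a one-sided domination $\mu^{rest}(\Pi(G))\le c_mc_n\,\hat\nu^{rest}(G)$, hence mutual absolute continuity but not equality. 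You bypass the saturation issue entirely with the observation that on $\Omega_{\gamma}(t,\mathbf{X},\mathbf{Y})$ the configuration $\mathbb{Z}(\mathbf{X})$ already lies in $A_{\gamma}(t)$ (every $z+X_z\in J(z)$ unconditionally, and the path-vertex constraints $z_k+X_{z_k}\in J(z_k)\cap J(z_{k+1})$ are exactly what openness of $\gamma$ demands), so $\{\Pi(\mathbb{Z}(\mathbf{X}))\in\Lambda_{\gamma}\}$ and $\{\mathbb{Z}(\mathbf{X})\in C^{\omega}_{\gamma}\}$ both coincide with $\Omega_{\gamma}(t,\mathbf{X},\mathbf{Y})$; that is precisely why the pushforward identity is exact term by term, with no correction constants. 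Combined with the TV-contraction under $\Pi$ and Lemma~\ref{KEY}, this gives $\mu^{rest}=\mu^{rest}_0$, a strictly stronger conclusion than the Proposition states. The one point you should spell out more carefully is the domain issue you yourself flag: $\hat\nu_t$ is introduced only on $\mathcal{B}_A((\mathbb{R}^d)^{\mathbb{Z}^d})$, so to write $\hat\nu_t(\Pi^{-1}(B))$ for arbitrary $B\in\mathcal{B}(\mathcal{M}(\mathbb{R}^d))$ you should either note that the defining formula for $\hat\nu_t$ in Definition~\ref{nu} makes sense on all of $\mathcal{B}((\mathbb{R}^d)^{\mathbb{Z}^d})$, or pass through the same approximation by $\mathcal{B}_K$-sets used in Lemma~\ref{KEY}; you indicate the latter, which is fine, but this should be made explicit before invoking the TV-limit.
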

\begin{proof}

First, we define\\
\centerline{$\mu(\cdot):=\mathbb{P}(\Pi(\mathbb{Z}(\mathbf{X}))\in\cdot), \ \hat{\nu}(\cdot):=\mathbb{P}(\mathbb{Z}(\mathbf{X})\in\cdot)$.}\\
Then, the following holds.\\
\centerline{$\displaystyle\mu=\hat{\nu}\circ\Pi^{-1}$.}

Note that the projection $\Pi$ is, of course, measurable but its image is not.
Now, we introduce $((\mathbb{R}^d)^{\mathbb{Z}^d},\overline{\mathcal{B}((\mathbb{R}^d)^{\mathbb{Z}^d})},\hat{\nu})$ as a complete space of $((\mathbb{R}^d)^{\mathbb{Z}^d},\mathcal{B}((\mathbb{R}^d)^{\mathbb{Z}^d}),\hat{\nu})$ and introduce $\mathcal{G}=\{A\subset\mathcal{M}(\mathbb{R}^d): \Pi^{-1}(A)\in\overline{\mathcal{B}((\mathbb{R}^d)^{\mathbb{Z}^d})}\}$.
Then, since the space $(\mathcal{M}(\mathbb{R}^d),\mathcal{G}, \mu)$ becomes a complete space, image of projection $\Pi$ becomes a measurable.
In the following, we will discuss in the spaces $((\mathbb{R}^d)^{\mathbb{Z}^d},\overline{\mathcal{B}((\mathbb{R}^d)^{\mathbb{Z}^d})},\hat{\nu})$ and $(\mathcal{M}(\mathbb{R}^d),\mathcal{G}, \mu)$.

Next, because $\hat{\nu}(C^{\omega}_{\gamma}(t,\mathbf{X},\mathbf{Y}))>0$ holds, we have the following.\\
For the monotonically increasing sequence $0<c_1<c_2<\dots<c_m<\cdots$,
\begin{equation}
\label{Point}
\exists m\in\mathbb{N}, \ \exists A_m\subset C^{\omega}_{\gamma}(t,\mathbf{X},\mathbf{Y}) \  {\rm s.t.} \  \hat{\nu}(\Pi^{-1}(\Pi(A_m)))\leq c_m \hat{\nu}(A_m).
\end{equation}
We take $m_1, m_2, \dots \in\mathbb{N}$ that satisfies (\ref{Point}). 
Then, because the space $\mathcal{M}(\mathbb{R}^d)$ is Polish space, we can define $\displaystyle A:=\bigcup_{i\in\Lambda}A_{m_i}$, where $\Lambda$ is a countable set.

Now, we prove $\hat{\nu}(C^{\omega}_{\gamma}(t,\mathbf{X},\mathbf{Y})\setminus A)=0$ with proof by contradiction.

If $\hat{\nu}(C^{\omega}_{\gamma}(t,\mathbf{X},\mathbf{Y})\setminus A)>0$ holds, then,\\
\centerline{$\forall m\in\mathbb{N}, \ \exists A_{\infty}\subset C^{\omega}_{\gamma}(t,\mathbf{X},\mathbf{Y})\setminus A \ {\rm s.t.} \  \hat{\nu}(A_{\infty})>0, \ \hat{\nu}(\Pi^{-1}(\Pi(A_{\infty})))> c_m \hat{\nu}(A_{\infty})$}\\
holds.
However, since $\hat{\nu}(\Pi^{-1}(\Pi(A_{\infty})))=\infty$ is established, contradiction arises.
Therefore, $\hat{\nu}(C^{\omega}_{\gamma}(t,\mathbf{X},\mathbf{Y})\setminus A)=0$ holds.

Now, we take $B\subset C^{\omega}_{\gamma}(t,\mathbf{X},\mathbf{Y})\setminus A$ and note that $\hat{\nu}(B)=0$.
Then,  for $D\subset C^{\omega}_{\gamma}(t,\mathbf{X},\mathbf{Y})$ s.t. $\hat{\nu}(D)>0$, $\hat{\nu}(B\cup D)>0$ is established.
Therefore, because we have $\hat{\nu}(\Pi^{-1}(\Pi(B\cup D)))\leq c_m \hat{\nu}(B\cup D)$, $B\subset B\cup D\subset A$ holds and we have $B=\phi$.

For $\forall E \subset C^{\omega}_{\gamma}(t,\mathbf{X},\mathbf{Y})$, the following can be said.\\
\centerline{$\exists m\in\mathbb{N}, \  {\rm s.t.} \  \hat{\nu}(\Pi^{-1}(\Pi(E)))\leq c_m \hat{\nu}(E)$.}

Similarly, for $\forall F\subset (\mathbb{R}^d)^{\mathbb{Z}^d}$, note that the following holds.\\
\centerline{$\exists n \in\mathbb{N} \ {\rm s.t.} \  \hat{\nu}(\Pi^{-1}(\Pi(F)\cap\Pi(C^{\omega}_{\gamma}(t,\mathbf{X},\mathbf{Y}))))\leq c_n \hat{\nu}(\Pi^{-1}(\Pi(F\cap C^{\omega}_{\gamma}(t,\mathbf{X},\mathbf{Y}))))$.}

Now, for $G\subset(\mathbb{R}^d)^{\mathbb{Z}^d}$, suppose that $\hat{\nu}^{rest}(G)=0$.
Then, by definitions (\ref{v1}) and (\ref{u1}),
\begin{equation*}
\begin{split}
\mu^{rest}(\Pi(G))&=\lim_{t\rightarrow\infty}\frac{1}{(d\hat{p})^t}\sum_{\gamma\in\Gamma_t}\mu(\Pi(G)\cap\Lambda_{\gamma}(t,\Omega(t, \mathbf{X}, \mathbf{Y})))\\
&=\lim_{t\rightarrow\infty}\frac{1}{(d\hat{p})^t}\sum_{\gamma\in\Gamma_t}\hat{\nu}(\Pi^{-1}(\Pi(G)\cap\Pi(C^{\omega}_{\gamma}(t,\mathbf{X},\mathbf{Y}))))\\
&\leq\lim_{t\rightarrow\infty}\frac{1}{(d\hat{p})^t}\sum_{\gamma\in\Gamma_t} c_n \hat{\nu}(\Pi^{-1}(\Pi(G\cap C^{\omega}_{\gamma}(t,\mathbf{X},\mathbf{Y}))))\\
&\leq\lim_{t\rightarrow\infty}\frac{1}{(d\hat{p})^t}\sum_{\gamma\in\Gamma_t} c_m c_n\hat{\nu}(G\cap C^{\omega}_{\gamma}(t,\mathbf{X},\mathbf{Y}))\\
&=c_m c_n\hat{\nu}^{rest}(G)=0.
\end{split}
\end{equation*}
Consequently, by (\ref{rest}), $\mu^{rest}$ and $\mu^{rest}_0$ are mutually absolutely continuous.
\end{proof}
\section{Main result}
In this chapter, if $d\geq4$, then we show that critical value of Uniform perturbation exists. Now, we state our main result below.
\begin{theorem}
Let $\mathbb{Z}(\mathbf{X})$ and $\mathbb{Z}_0(\mathbf{Y})$ be perturbed lattice of $\mathbb{Z}^d$ with Uniform perturbation $U_d(-L, L)$.\\
Then, for $d\geq4$, the following holds.
\begin{enumerate}
\item if $\displaystyle L>L_c(d)$, then\\
$\Pi(\mathbb{Z}(\mathbf{X}))$ and $\Pi(\mathbb{Z}_0(\mathbf{Y}))$ are not mutually singular and not mutually absolutely continuous.
\item if $\displaystyle0<L<L_c(d)$, then\\
$\Pi(\mathbb{Z}(\mathbf{X}))$ and $\Pi(\mathbb{Z}_0(\mathbf{Y}))$ are mutually singular.
\end{enumerate}
\label{Main}
\end{theorem}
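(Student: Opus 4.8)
The plan is to reduce the whole statement to the single GOBP parameter $\hat{p}(L)=\left(1-\tfrac{1}{2L}\right)^2$ produced by Lemma~\ref{indepen}, which on $[\tfrac12,\infty)$ is continuous and strictly increasing from $0$ to $1$ (for $0<L<\tfrac12$ the boxes $J(z)$ and $J(z\pm e_i)$ are disjoint, no bond is ever open, and $|\overline{N}_{\infty}|\equiv 0$). Since $d\ge 4$ means $d^{*}\ge 3$, Theorem~\ref{YNobuo}(1) gives $\mathbb{P}(|\overline{N}_{\infty}|>0)>0$ once $\hat{p}$, hence $L$, is large, whereas for $\hat{p}$ below the oriented-percolation point the open cluster of the origin is a.s.\ finite and $|\overline{N}_{\infty}|=0$; using the monotonicity of the event $\{|\overline{N}_{\infty}|>0\}$ in $\hat{p}$ (a monotone-coupling argument) the transition is sharp, and we \emph{define} $L_c(d)$ to be the value of $L$ at which it occurs, so that $\mathbb{P}(\Omega(|\overline{N}_{\infty}|))>0$ if and only if $L>L_c(d)$; note $\tfrac12<L_c(d)<\infty$.

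Suppose $L>L_c(d)$. Non-absolute-continuity is elementary and in fact holds for every $L>0$: since $X_0$ is uniform on $J(0)=[-L,L]^d$, the point $0+X_0$ of $\mathbb{Z}(\mathbf{X})$ lies in $J(0)$ almost surely, so the Borel event $E=\{\xi\in\mathcal{M}(\mathbb{R}^d):\xi(J(0))=0\}$ is null for the law $\mu$ of $\Pi(\mathbb{Z}(\mathbf{X}))$; on the other hand only finitely many $z\ne 0$ satisfy $J(z)\cap J(0)\ne\emptyset$ and each such $z$ has $\mathbb{P}(z+Y_z\in J(0))<1$, so by independence $\mu_0(E)=\prod_{z\ne 0}\mathbb{P}(z+Y_z\notin J(0))>0$ for the law $\mu_0$ of $\Pi(\mathbb{Z}_0(\mathbf{Y}))$, whence $\mu_0\not\ll\mu$. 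For non-singularity I use Proposition~\ref{mu}: when $L>L_c(d)$ the measures $\mu^{rest}$ and $\mu^{rest}_0$ are mutually absolutely continuous, and they are probability measures, since $\mu_t(\mathcal{M}(\mathbb{R}^d))=(d\hat{p})^{-t}\sum_{\gamma\in\Gamma_t}\mathbb{P}(\Omega_{\gamma}(t,\mathbf{X},\mathbf{Y}))=1$ (the event $\{\Pi(\mathbb{Z}(\mathbf{X}))\in\Lambda_{\gamma}\}$ equals $\Omega_{\gamma}(t,\mathbf{X},\mathbf{Y})$, and $\sum_{\gamma\in\Gamma_t}\mathbb{P}(\Omega_{\gamma}(t,\mathbf{X},\mathbf{Y}))=(d\hat{p})^t$), and likewise for $\mu_{0,t}$; moreover $\mu^{rest}\ll\mu$ and $\mu^{rest}_0\ll\mu_0$ directly from the definition, because a $\mu$-null (resp.\ $\mu_0$-null) set makes every summand vanish. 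Now if $\mu\perp\mu_0$, choose $A$ with $\mu(A^c)=0=\mu_0(A)$; then $\mu^{rest}(A^c)=0$, so $\mu^{rest}(A)=\mu^{rest}(\mathcal{M}(\mathbb{R}^d))=1>0$, while $\mu^{rest}_0(A)=0$ and $\mu^{rest}\sim\mu^{rest}_0$ force $\mu^{rest}(A)=0$ --- a contradiction. Hence $\Pi(\mathbb{Z}(\mathbf{X}))$ and $\Pi(\mathbb{Z}_0(\mathbf{Y}))$ are not mutually singular; together with the non-absolute-continuity shown above this establishes part~(1).

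Suppose now $0<L<L_c(d)$, so $|\overline{N}_{\infty}|=0$ a.s.; we must produce an event carrying full $\mu$-mass and zero $\mu_0$-mass. The natural candidate is $G=\{\xi\in\mathcal{M}(\mathbb{R}^d):\text{there is a bijection }\phi\colon\xi\to\mathbb{Z}^d\text{ with }x\in J(\phi(x))\text{ for every }x\in\xi\}$, a Borel property by a local-finiteness plus Hall/K\"onig compactness argument. The labelling $z+X_z\mapsto z$ always witnesses $G$ for $\mathbb{Z}(\mathbf{X})$, so $\mu(G)=1$; for $\mathbb{Z}_0(\mathbf{Y})$ any admissible bijection forces the ``hole at the origin'' to be transported along an infinite self-avoiding sequence $0,z_1,z_2,\dots$ with $z_k+Y_{z_k}\in J(z_{k-1})$, i.e.\ along an infinite open oriented path from the origin, which does not exist a.s.\ once $\hat{p}(L)$ is below the relevant oriented-percolation point, so $\mu_0(G)=0$. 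This settles the strictly subcritical part of the interval; on the remaining part (percolation supercritical but $|\overline{N}_{\infty}|$ still $0$) this $G$ no longer separates the laws and one must use the degeneracy of the martingale $|\overline{N}_t|$ quantitatively --- the weights attached to the configurations shifted along length-$t$ paths are of order $|\overline{N}_t|\to 0$, so the ``diffusing of the defect'' behind Proposition~\ref{mu} fails and $\mu^{rest},\mu^{rest}_0$ should themselves become mutually singular, forcing $\mu\perp\mu_0$ --- or else run a coupling argument showing that singularity of the pair at $L$ descends to every $L'<L$ and thereby reduces the claim to small $L$. Carrying this last step out across the whole interval $(0,L_c(d))$, and in particular matching the oriented-percolation threshold to the martingale threshold $L_c(d)$, is the step I expect to be the main obstacle.
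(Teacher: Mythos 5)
Your proposal follows essentially the same route as the paper's proof: reduce to the GOBP parameter $\hat{p}(L)$ via Lemma~\ref{indepen}, invoke Yoshida's martingale result (Theorem~\ref{YNobuo}) to get $\mathbb{P}(|\overline{N}_\infty|>0)>0$ for large $\hat p$, and use Proposition~\ref{mu} as the engine for non-singularity in the supercritical regime, with non-absolute-continuity supplied separately. Where you differ is in completeness, not in route. Your direct non-absolute-continuity argument (the event $\{\xi\in\mathcal M(\mathbb R^d):\xi(J(0))=0\}$ is $\mu$-null because the perturbations have compact support, yet has positive $\mu_0$-mass) replaces the paper's bare citation of Holroyd--Soo and is correct; and your deduction of non-singularity from Proposition~\ref{mu} --- $\mu^{rest}\ll\mu$, $\mu_0^{rest}\ll\mu_0$, both limits have positive total mass, $\mu^{rest}\sim\mu_0^{rest}$, hence $\mu\perp\mu_0$ would force $\mu^{rest}(\mathcal M(\mathbb R^d))=0$ --- is the argument the paper compresses into the uninformative sentence ``mutually absolutely continuous on a restricted set, mutually singular on other sets''. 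One minor caveat: the total mass of $\mu^{rest}$ is the limit of the normalized path counts and is $1$ only if the martingale $|\overline{N}_t|$ is uniformly integrable; you do not need mass exactly $1$, only that it be positive, which does follow from $\mathbb{P}(|\overline{N}_\infty|>0)>0$, so the argument stands.

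The gap you flag at the end is genuine, but it is also a gap in the paper's own proof, which it does not acknowledge. For small $\hat p$ the paper writes only ``obviously mutually singular'', and it asserts that $L_c(d)$ exists because $\hat p$ is increasing in $L$; that shows a subcritical regime and a supercritical regime both exist but neither proves a single sharp threshold nor says what happens when the oriented cluster of the origin is infinite while $|\overline{N}_\infty|=0$, which is a nonempty interval in general. Your Hall-matching argument is a rigorous version of the paper's ``obviously'' in the strictly subcritical range; the intermediate range, and the monotonicity in $\hat p$ of the event $\{|\overline{N}_\infty|>0\}$ that your definition of $L_c(d)$ presupposes, are the two pieces missing from both your proposal and the paper. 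If anything, you identify the true obstruction more honestly than the source does.
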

\begin{proof}
First, in the paper of Holroyd and Soo$[1]$, if $X_z$ and $Y_z$ have compact support, then, for all $d$, note that $\Pi(\mathbb{Z}(\mathbf{X}))$ and $\Pi(\mathbb{Z}_0(\mathbf{Y}))$ are not mutually absolutely continuous.

If $d^{*}\geq 3$ and $\hat{p}$ is large enough, by Theorem \ref{YNobuo}(1), $\mathbb{P}(\Omega(|\overline{N}_{\infty}|))>0$ holds.
Therefore, by Proposition \ref{mu}, even though $\Pi(\mathbb{Z}(\mathbf{X}))$ and $\Pi(\mathbb{Z}_0(\mathbf{Y}))$ are mutually absolutely continuous on a restricted set, $\Pi(\mathbb{Z}(\mathbf{X}))$ and $\Pi(\mathbb{Z}_0(\mathbf{Y}))$ are mutually singular on other sets.
That is to say, if $d\geq 4$ and $\hat{p}$ is large enough, then $\Pi(\mathbb{Z}(\mathbf{X}))$ and $\Pi(\mathbb{Z}_0(\mathbf{Y}))$ are not mutually singular and not mutually absolutely continuous.

On the other hand, if $d\geq 4$ and $\hat{p}$ is small enough, then, obviously $\Pi(\mathbb{Z}(\mathbf{X}))$ and $\Pi(\mathbb{Z}_0(\mathbf{Y}))$ are mutually singular.

Now, we note that $(\mathbb{N}\cup\{0\})\times\mathbb{Z}^{d^{*}}_{\geq 0}\subset \mathbb{Z}^{d}$ and think of the embedding $f:(\mathbb{N}\cup\{0\})\times\mathbb{Z}^{d^{*}}_{\geq 0}\xrightarrow{}\mathbb{Z}^{d}$. 
Since embedding $f$ is a mapping that keeps the mathematical structure, generalized oriented percolation process in $d^{*}+1$ space-time dimensions can be embedded in oriented percolation process in $d$ dimensions while keeping the mathematical structure.

Consequently, because we can get arbitrary large $\hat{p}$ by $L$ increasing from (\ref{plambda}),  for $d\geq 4$, critical variable $L_c(d)$ exists.

From the above, if $d \geq 4$ and $\displaystyle L>L_{c}(d)$, then $\Pi(\mathbb{Z}(\mathbf{X}))$ and  $\Pi(\mathbb{Z}_0(\mathbf{Y}))$ are not mutually singular and not absolutely continuous.
Also, if $d \geq 4$ and $\displaystyle0<L<L_{c}(d)$, then $\Pi(\mathbb{Z}(\mathbf{X}))$ and $\Pi(\mathbb{Z}_0(\mathbf{Y}))$ are mutually singular. 
\end{proof}
\section*{Acknowledgements}
We would like to thank Professor Hideki Tanemura for a lot of helpful discussions and also for valuable comments for the earlier draft. 
We are grateful to Professor Tomoyuki Shirai for suggesting the problem in this paper and encouragement. We also thank Professor Takashi Imamura for comments for the draft.
\end{spacing}
\newpage

\end{document}